\documentclass[a4paper,12pt]{amsart}
\pdfoutput=1
\usepackage{amsmath,amssymb,amsfonts,amsthm}
\usepackage{setspace}
\usepackage{graphicx}

\newtheorem{theorem}{Theorem}
\newtheorem*{theorem*}{Theorem}
\newtheorem{lemma}{Lemma}

\newtheorem{remark}{Remark}
\newtheorem*{remark*}{Remark}
\newtheorem{definition}{Definition}
\newtheorem*{definition*}{Definition}

\DeclareMathOperator{\rk}{rk}

\begin{document}
\onehalfspacing

\title[Commutator width of Chevalley groups]{Commutator width of Chevalley groups\\over rings of stable rank $1$}
\keywords{commutator width, unitriangular factorization, Gauss decomposition}
\author{Andrei Smolensky}
\email{andrei.smolensky@gmail.com}
\address{Department of Mathematics and Mechanics, Saint Petersburg State University, Universitetsky prospekt, 28, 198504, Peterhof, Saint Petersburg, Russia}
\thanks{Research was supported by RFFI (grants 12-01-00947-a and 14-01-00820) and by State Financed task project 6.38.191.2014 at Saint Petersburg State University.}
\date{\today}
\subjclass[2000]{Primary 20G07; Secondary 20G41}
\begin{abstract}
An estimate on the commutator width is given for Chevalley groups over rings of stable rank $1$, and the general method suitable for other rings of small dimension.
\end{abstract}
\maketitle

\section{Introduction}
The study of commutators in linear groups over fields has a rich history, culminating in the celebrated proof of Ore conjecture \cite{EllGorOreConj,LieOBrShaTieOre}, while the groups over rings received much less attention. It was shown in \cite{VasWheCommComp}, that for an associative ring $R$ of stable rank $1$ the group $GL(n,R)$, $n\geqslant3$ has the commutator width $\leqslant2$, i.e. that every element of its commutator subgroup can be written as a product of two commutators. It was then generalised (with somewhat worse bounds) in \cite{ArlVasYouCommUnitary} to symplectic, orthogonal and unitary groups in even dimension in the context of hyperbolic unitary groups \cite{BakVavUnitary}. The goal of the present paper is to provide a similar result for exceptional groups.

The proof follows the line of those in \cite{VasWheCommComp,ArlVasYouCommUnitary}, but tries to avoid explicit matrix calculations, thus giving a simpler and (almost) uniform treatment for Chevalley groups of all normal types. Apart from exceptional groups, it also covers Spin and odd-dimensional orthogonal groups, which were not considered in the previous papers.

It was a surprise for the author that the case of special linear group over rings of stable rank $1$ is not presented in \cite{VasWheCommComp} or anywhere else. Apparently, the reason is that one has to do some additional considerations as in Lemma~\ref{lemma:inverse-of-companion} and Remarks~\ref{rmk:inverse-of-companion-a4k+1} and \ref{rmk:conjugate-to-companion-minus-a4k+1} (compare with Proposition~8 of \cite{VasWheCommComp}), leading to certain (insignificant) complications in the proof of Theorem~\ref{thm:comm-width-sr1} for $SL_{4k+2}$.

Let $\Phi$ be a reduced irreducible root system, $\alpha_1,\ldots,\alpha_\ell$ its fundamental roots (numbered as in Bourbaki), $W(\Phi)$ the corresponding Weyl group, generated by the simple reflections $\sigma_1,\ldots,\sigma_\ell$. For a commutative ring $R$ with $1$ by the Chevalley group $G(\Phi,R)$ we mean the group of point of the correspong Chevalley-Demazure group scheme $G(\Phi,-)$. Unless specified otherwise, all groups are assumed to be simply connected.

We extensively use the weight diagrams, see \cite{PloSemVavVBRA,VavThirdLook,VavDIY}. Weight diagrams allow to visualize the action of the elementary root unipotents $x_\alpha(t)$ (the generators of the elementary subgroup $E(\Phi,R)$). Below is the weight diagram for the natural vector representation of $SL_{\ell+1}$, the nodes correspond to the weights of the representation, and the edges to the fundamental roots. $x_{ij}(t)$ acts on $(v_k)_{k=1}^{\ell+1}$ by adding $tv_j$ to $v_i$, or, in terms of the diagram, along the chain connecting $j$ to $i$.
\begin{figure}[h]
\includegraphics{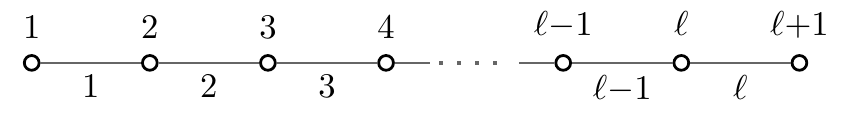}
\caption{$(\mathsf{A}_\ell,\varpi_1)$}
\label{fig:al-nat}
\end{figure}

We call a set of roots $S\subseteq\Phi$ closed if for any $\alpha,\beta\in S$, if $\alpha+\beta\in\Phi$ is a root, then $\alpha+\beta\in S$. Two important examples of closed sets of roots are the following. Let $m_i(\alpha)$, $i=1,\ldots,\ell$ denote the coefficients in the expansion of $\alpha$ as an integer linear combination of the fundamental roots, then put
\[ \Sigma_k=\{\alpha\in\Phi\mid m_k(\alpha)\geqslant1 \},\quad
\Delta_k=\{\alpha\in\Phi\mid m_l(\alpha)=0\}. \]
The sets $\Sigma_k$ are unipotent (i.e. $S\cap-S=\varnothing$), and $\Delta_k$ are symmetric (i.e. $S=-S$). The notation $\Sigma_k^{=n}$, $\Sigma_k^{\leqslant n}$ and $\Sigma_k^{\geqslant n}$ is self-explaining.

The sets of all positive and negative roots $\Phi^+,\Phi^-$ are also both closed and unipotent.

To a closed set of roots $S$ we associate a subgroup $E(S,R)=\langle x_\alpha(t)\mid\alpha\in S,\ t\in R\rangle$. The ring $R$ is often clear from the context and thus omitted in the notation. If $S$ is unipotent, we sometimes write $U(S)$ instead of $E(S)$. The unitriangular subgroups $U(\Phi^\pm)$ are denoted by $U^\pm$.

$U(\Sigma_k)$ is the unipotent radical of the corresponding parabolic subgroup $P_k$ (or $E(\Delta_k\cup\Sigma_k)$). Levi decomposition states that $E(\Delta_k\cup\Sigma_k)$ is the semi-direct product of its elementary Levi subgroup $E(\Delta_k)$ and its normal subgroup $U(\Sigma_k)$.

\section{Commutators and companion matrices}
\begin{definition}\label{def:weyl-trajectories}
Fix an element $w\in W(\Phi)$ and a natural number $n$ and set
\begin{align*}
&\Omega^w_n=\{\alpha\in\Phi^+\mid w^{n+1}\alpha\in\Phi^-,\ w\alpha,w^2\alpha,\ldots,w^n\alpha\in\Phi^+\},\\
&\Theta^w=\{\alpha\in\Phi^+\mid w^k\alpha\in\Phi^+\ \forall k\in\mathbb{Z}\}.
\end{align*}
When the choice of particular element $w$ is clear from the context, we usually omit the super index and simply write $\Theta$ and $\Omega_n$.
\end{definition}
Note that $\Phi^+=\Theta\cup\bigcup_{k\geqslant0}\Omega_k$ and the union is disjoint.
\begin{remark}
$\Theta$ is closed for any $w\in W$.
\end{remark}
\begin{proof}
Suppose there are $\alpha,\beta\in\Theta$ with $\alpha+\beta\in\Phi^+\setminus\Theta$. Then there is some $k\geqslant0$ such that $w^k(\alpha+\beta)\in\Phi^-$. So either $w^k\alpha$ or $w^k\beta$ is negative.
\end{proof}
\begin{lemma}
$\Theta\cup\bigcup_{k=0}^n\Omega_k$ is closed for any $n$. As a corollary, $\Phi^+\setminus\cup_{k\geqslant n}\Omega_k$ is closed for any $n$.
\end{lemma}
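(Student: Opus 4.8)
The plan is to argue by contradiction using the single elementary fact that already powered the preceding Remark: for any $k$, if $w^k\alpha\in\Phi^+$ and $w^k\beta\in\Phi^+$ then $w^k(\alpha+\beta)\in\Phi^+$, since a sum of two positive roots that is again a root is positive. Equivalently, a descent of $\alpha+\beta$ forces a descent of one of the summands; writing $D(\delta)=\{k\mid w^k\delta\in\Phi^-\}$ for the descent set, this reads $D(\alpha+\beta)\subseteq D(\alpha)\cup D(\beta)$. It is convenient to record, for $\delta\in\Phi^+$, the first descent time $\varphi(\delta)=\min\{k\geqslant1\mid w^k\delta\in\Phi^-\}$, with $\varphi(\delta)=\infty$ exactly when $\delta\in\Theta$; then $\delta\in\Theta\cup\bigcup_{k=0}^n\Omega_k$ if and only if $\varphi(\delta)\leqslant n+1$ or $\varphi(\delta)=\infty$, while the complement consists of the $\delta$ with $n+2\leqslant\varphi(\delta)<\infty$.

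First I would dispose of the case where both summands lie in $\Theta$: then $\gamma=\alpha+\beta\in\Theta$ by the Remark. So suppose $\alpha,\beta$ lie in the set but $\gamma\in\Omega_m$ for some $m\geqslant n+1$, i.e.\ $w\gamma,\dots,w^m\gamma\in\Phi^+$ while $w^{m+1}\gamma\in\Phi^-$. Applying the elementary fact at the descent $m+1$ gives, say, $w^{m+1}\alpha\in\Phi^-$, so $\alpha\notin\Theta$ and hence $\varphi(\alpha)\leqslant n+1$. When $\beta$ also has $\varphi(\beta)\leqslant n+1$, the first descents of both summands fall inside the window $\{1,\dots,n+1\}$ on which $\gamma$ remains positive; chasing the sign of $w^j\gamma=w^j\alpha+w^j\beta$ through this window with the elementary fact, one checks that $\gamma$ must descend no later than $\max(\varphi(\alpha),\varphi(\beta))\leqslant n+1$, contradicting $m\geqslant n+1$.

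The remaining, and genuinely delicate, case is the mixed one, $\beta\in\Theta$ with $\alpha\in\Omega_{\leqslant n}$: here the elementary fact only yields $D(\gamma)\subseteq D(\alpha)$, i.e.\ $\varphi(\gamma)\geqslant\varphi(\alpha)$, which is the wrong inequality. I expect this to be the main obstacle. The tool to overcome it is that $\Theta$ is not merely closed but $w$-invariant: since every power of $w$ keeps the roots of $\Theta$ positive, $w\Theta=\Theta$, so $\Theta$ is the positive system of a $w$-stable subsystem $\Phi_\Theta=\Phi\cap V_\Theta$, where $V_\Theta=\operatorname{span}\Theta$. Decomposing the ambient space as $V_\Theta\oplus V_\Theta^\perp$ with $w$ orthogonal, the summand $\beta\in\Theta$ lies in $V_\Theta$, so $\gamma$ and $\alpha$ share the same $V_\Theta^\perp$-component. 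Because $w$ permutes $\Theta$, no power of $w$ can carry a root across $\Phi^-$ ``inside $V_\Theta$''; the crossing is governed by the transverse component, which $\gamma$ and $\alpha$ have in common. This should force $\varphi(\gamma)=\varphi(\alpha)\leqslant n+1$ (or else $\gamma\in\Theta$), again contradicting $m\geqslant n+1$. The corollary is then immediate, since $\Phi^+\setminus\bigcup_{k\geqslant n}\Omega_k=\Theta\cup\bigcup_{k=0}^{n-1}\Omega_k$.
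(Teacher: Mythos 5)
Your reduction to the first--descent function $\varphi$ and your treatment of the case $\alpha,\beta\in\Theta$ are fine, but each of the two remaining cases contains a genuine gap. In the case $\varphi(\alpha),\varphi(\beta)\leqslant n+1$, the inequality $\varphi(\gamma)\leqslant\max(\varphi(\alpha),\varphi(\beta))$ does not follow from your elementary fact: writing $a=\varphi(\alpha)\leqslant b=\varphi(\beta)$, at time $b$ you know $w^b\beta\in\Phi^-$, but $w^b\alpha$ may well have returned to $\Phi^+$ after its first descent at time $a$, and then the sign of $w^b\gamma=w^b\alpha+w^b\beta$ is undetermined. The inclusion $D(\gamma)\subseteq D(\alpha)\cup D(\beta)$ only ever bounds $\varphi(\gamma)$ from \emph{below}, so no amount of chasing signs inside the window $\{1,\dots,n+1\}$ produces the required upper bound; this case is not closed by the proposed argument.

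In the mixed case the structural claim you lean on is false. It is true that $w\Theta=\Theta$, but $\Theta$ is \emph{not} in general the positive system of the subsystem $\Phi\cap\operatorname{span}\Theta$. In the one situation in this paper where $\Theta\neq\varnothing$, namely $\Phi=\mathsf{E}_6$ with $w=\widetilde{\pi}$ and $\Theta=\Sigma_2$, differences of elements of $\Sigma_2$ already yield every fundamental root, so $\operatorname{span}\Theta$ is the whole root space, $V_\Theta^\perp=0$, and your orthogonal decomposition degenerates; moreover $\Sigma_2\cup(-\Sigma_2)$ is not even closed (e.g. $\alpha_2+\bigl(-(\alpha_2+\alpha_4)\bigr)=-\alpha_4$), hence is not a subsystem. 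The conclusion $\varphi(\gamma)=\varphi(\alpha)$ is therefore unsupported. For comparison, the paper's own proof is a short contradiction argument reading off the signs of $w^{m+1}\alpha$, $w^{m+1}\beta$ and $w^{n+1}(\alpha+\beta)$ directly, with none of this machinery. What actually saves the mixed case in the instance the paper needs is a much more elementary observation: $\Theta=\Sigma_2$ is an ideal of $\Phi^+$ (the coefficient $m_2$ cannot decrease when a positive root is added), so $\beta\in\Theta$ forces $\alpha+\beta\in\Theta$ outright — but that is a different argument from the one you propose, and it is not the one you wrote.
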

\begin{proof}
Suppose there are $\alpha,\beta\in\Theta\cup\bigcup_{k=0}^n\Omega_k$ such that $\alpha+\beta\in\cup_{k>n}\Omega_k$. Then there exists $m>n$ with $w^{m+1}(\alpha+\beta)\in\Phi^-$ and $w^i(\alpha+\beta)\in\Phi^+$ for all $i=0,\ldots,m$.

Thus either $w^{m+1}\alpha$ or $w^{m+1}\beta$ is negative, so one of $\alpha,\beta$ lies outside of $\Theta$. Since $w^{n+1}(\alpha+\beta)\in\Phi^+$, one has $\alpha\in\Theta\cup\bigcup_{k>n}\Omega_k$ or $\beta\in\Theta\cup\bigcup_{k>n}\Omega_k$. If $\alpha\in\Theta$, then $\beta\in\cup_{k>n}\Omega_k$, a contradiction, and similarly for $\beta\in\Theta$.
\end{proof}

We write, as usual
\[ w_\alpha(u)=x_\alpha(u)x_{-\alpha}(-u^{-1})x_\alpha(u),\quad h_\alpha(u)=w_\alpha(u)w_\alpha(-1). \]
\begin{remark}\label{rmk:R3}
For any $\alpha,\beta\in\Phi$, $t\in R$, $u,v\in R^*$
\begin{align*}
& w_\alpha(u)x_\beta(t)w_\alpha(u)^{-1}=x_{\sigma_\alpha\beta}\left(\pm u^{-\langle\beta,\alpha\rangle}t\right),\\
& w_\alpha(u)w_\beta(v)w_\alpha(u)^{-1}=w_{\sigma_\alpha\beta}\left(\pm u^{-\langle\beta,\alpha\rangle}v\right),\\
& h_\alpha(u)w_\beta(v)h_\alpha(u)^{-1}=w_\beta\left(u^{\langle\beta,\alpha\rangle}v\right).
\end{align*}
\end{remark}
The above relations hold on the level of Steinberg group, while on the level of elementary group they imply the following formula (note that signs cancel out):
\[ w_\alpha(1)h_\beta(\varepsilon)w_\alpha(1)^{-1}=h_{\sigma_\alpha\beta}(\pm\varepsilon)h_{\sigma_\alpha\beta}(\pm1)^{-1}=h_{\sigma_\alpha\beta}(\varepsilon). \]

The extended Weyl group $\widetilde{W}(\Phi)$ is the subgroup of $G(\Phi,R)$, generated by $w_\alpha(1)$, $\alpha\in\Phi$. If $2\neq0$ in $R$, it coincides with the $N(\Phi,\mathbb{Z})$, the group of integer point of the torus normalizer. It is an extension $C_2^\ell\hookrightarrow N(\Phi,\mathbb{Z})\twoheadrightarrow W(\Phi)$, and the action of the generators on the kernel is described by the above formula.

\begin{definition}\label{def:coxeter-lift}
Denote by $\widetilde\pi$ the following element of the Weyl group:
\begin{itemize}
\item $\mathsf{A}_\ell,\mathsf{B}_\ell,\mathsf{C}_\ell$: $\widetilde\pi=\sigma_1\sigma_2\ldots \sigma_\ell$, a Coxeter element;
\item $\mathsf{D}_\ell$: $\widetilde\pi=\sigma_\ell\ldots \sigma_2\sigma_1$;
\item $\mathsf{E}_6$: $\widetilde\pi=\sigma_1\sigma_3\sigma_4\sigma_5\sigma_6$, a Coxeter element of an $\mathsf{A}_5$ subsystem;
\item $\mathsf{E}_7$: $\widetilde\pi=\sigma_1\sigma_3\sigma_2\sigma_4\sigma_5\sigma_6\sigma_7$;
\item $\mathsf{E}_8$: $\widetilde\pi=\sigma_1\sigma_3\sigma_2\sigma_4\sigma_5\sigma_6\sigma_7\sigma_8$;
\item $\mathsf{F}_4$: $\widetilde\pi=\sigma_1\sigma_2\sigma_3\sigma_4$;
\item $\mathsf{G}_2$: $\widetilde\pi=\sigma_2\sigma_1$.
\end{itemize}
By $\pi$ denote a lift of $\widetilde\pi$ to the extended Weyl group, obtained by sending $\sigma_i$ to $w_i(1)$.
\end{definition}
For any Coxeter element $w_c$ one has $\Theta^{w_c}=\varnothing$. Thus with the choice of $\widetilde{\pi}$ as above $\Theta^{\widetilde{\pi}}=\varnothing$ in all cases except $\mathsf{E}_6$, when $\Theta^{\widetilde{\pi}}=\Sigma_2$.
\begin{definition}
A companion matrix is an element of the form $u\pi$ with $u\in U(\Sigma)$, where $\Sigma=\Omega^{\widetilde{\pi}}_0$ for $\Phi\neq\mathsf{E}_6$ and $\Sigma=\left(\Theta^{\widetilde{\pi}}\setminus\{\alpha_2\}\right)\cup\Omega^{\widetilde{\pi}}_0$ for $\Phi=\mathsf{E}_6$. Depending on the root system it can be described as:
\begin{itemize}
\item $\mathsf{A}_\ell$: $\Sigma=\Sigma_\ell$;
\item $\mathsf{B}_\ell$: $\Sigma=\left(\Sigma_\ell^{=2}\cap\Sigma_{\ell-1}^{=1}\right)\cup\{\alpha_\ell\}$ (marked black on the weight diagram of the adjoint representation, see Figure~\ref{fig:bl-adj});
\item $\mathsf{C}_\ell$: $\Sigma=\left(\Sigma_\ell^{=1}\cap\Sigma_{\ell-1}^{=1}\right)\cup\{\alpha_\ell\}=\Sigma_\ell^{=1}\cap\Sigma_{\ell-1}^{\leqslant1}$ (Figure~\ref{fig:cl-adj});
\item $\mathsf{D}_\ell$: $\Sigma=\Sigma_1\cap\left(\Delta_\ell\cup\Delta_{\ell-1}\right)$ (Figure~\ref{fig:dl-adj});
\item $\mathsf{E}_6$: $\Sigma=\left(\Sigma_6\cap\Delta_2\right)\cup\left(\Sigma_2\setminus\{\alpha_2\}\right)$ (Figure~\ref{fig:e6-adj});
\item $\mathsf{E}_7,\mathsf{E}_8,\mathsf{F}_4,\mathsf{G}_2$: see Figures~\ref{fig:e7-adj},~\ref{fig:e8-adj},~\ref{fig:f4-adj},~\ref{fig:g2-adj}.
\end{itemize}
\end{definition}
The above description (for $\Phi\neq\mathsf{E}_6$) is obtained as follows: first, one checks that $\widetilde{\pi}$ sends the right hand side to $\Phi^-$ and that the number of roots in it equals the rank of $\Phi$. Then it remains to note that $\left|\Omega^{\widetilde{\pi}}_0\right|=\rk(\Phi)$. This follows from the fact that all orbits of a Coxeter element $w_c$ have the same size, equal to the Coxeter number $h$ (since $w_c$ acts by rotation by $2\pi/h$ on its Coxeter plane and no root projects to zero) and that $\left|\Phi\right|=h\cdot\rk(\Phi)$. For $\Phi=\mathsf{E}_6$ one applies this argument to the $\mathsf{A}_5$-subsystem $\Delta_2$.

\begin{lemma}\label{lemma:conjugate-to-companion}
For any $u\in U^+(\Phi)$ exists $\eta\in U^+(\Phi)$ such that $\eta u\pi\eta^{-1}$ is a companion matrix.
\end{lemma}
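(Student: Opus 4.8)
The plan is to turn the conjugation into a problem about the unipotent part alone. Using the relations of Remark~\ref{rmk:R3}, conjugation by $\pi$ sends each $x_\beta(t)$ to $x_{\widetilde\pi\beta}(\pm t)$, so for any $\eta\in U^+$
\[ \eta\,u\pi\,\eta^{-1}=\bigl(\eta\,u\,(\pi\eta\pi^{-1})^{-1}\bigr)\,\pi. \]
Hence producing a companion matrix is the same as finding $\eta$ with $\eta\,u\,(\pi\eta\pi^{-1})^{-1}\in U(\Sigma)$. I would look for $\eta$ inside $U\bigl(\bigcup_{k\geqslant1}\Omega_k\bigr)$: since $\widetilde\pi$ carries $\Omega_k$ onto $\Omega_{k-1}$ for every $k\geqslant1$ (the orbit count that gives $\left|\Omega_0\right|=\rk\Phi$ shows each layer has $\rk\Phi$ elements and $\widetilde\pi$ is injective), conjugation by $\pi$ maps this group into $U^+$, so the whole reduction stays unitriangular, and the target $U(\Sigma)=U(\Omega_0)$ (for $\Phi\neq\mathsf E_6$, where $\Theta=\varnothing$) is exactly the bottom layer that $\widetilde\pi$ pushes into $\Phi^-$.

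The reduction I would carry out layer by layer, from the top down. Let $N$ be the largest index with $\Omega_N\neq\varnothing$. By the Lemma above each $\bigcup_{k=0}^{m}\Omega_k$ is closed, so the subgroups $U\bigl(\bigcup_{k=0}^{m}\Omega_k\bigr)$ form a chain from $U(\Omega_0)$ up to $U^+$. I would prove by descending induction on $m$ that every $v\in U\bigl(\bigcup_{k=0}^{m}\Omega_k\bigr)$ can be conjugated by some $\eta_m\in U(\Omega_m)$ into $U\bigl(\bigcup_{k=0}^{m-1}\Omega_k\bigr)$. Indeed $\pi\eta_m\pi^{-1}\in U(\Omega_{m-1})$, so $w=\eta_m\,v\,(\pi\eta_m\pi^{-1})^{-1}$ again lies in $U\bigl(\bigcup_{k=0}^{m}\Omega_k\bigr)$ by closedness, while the dependence of the top layer $\Omega_m$ of $w$ on the coordinates of $\eta_m$ is unitriangular with respect to height: the left factor $\eta_m$ adds its coordinates directly in $\Omega_m$, the right factor feeds only lower layers at leading order, and all commutator corrections raise the height. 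Solving this triangular system picks $\eta_m$ killing the whole layer $\Omega_m$; composing the resulting $\eta_N,\ldots,\eta_1$ yields the $\eta$ required by the lemma.

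The main difficulty is that $U\bigl(\bigcup_{k=0}^{m-1}\Omega_k\bigr)$ is \emph{not} normal in $U\bigl(\bigcup_{k=0}^{m}\Omega_k\bigr)$ — already in $\mathsf A_2$ one has $\alpha_2,\alpha_1+\alpha_2\in\Omega_0$ but $\alpha_1\in\Omega_1$ and $\alpha_1+\alpha_2\in\Omega_0$ — so ``the $\Omega_m$-component'' has to be read off carefully and the commutator corrections tracked. I would control them through the height filtration inside $U\bigl(\bigcup_{k=0}^{m}\Omega_k\bigr)$, fixing the coordinates of $\eta_m$ from the highest root of $\Omega_m$ downwards, so that at each height the equation for the corresponding coordinate is linear with invertible leading coefficient, every correction either leaving $\Omega_m$ for a lower layer (handled afterwards) or landing at greater height in $\Omega_m$, where the contribution is already determined. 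The remaining case is $\Phi=\mathsf E_6$: there $\Theta=\Sigma_2$ is nonempty and $\widetilde\pi$-stable, so $U(\Theta)$ is normalized by $\pi$ and its coordinates survive the reduction — this is why $\Sigma$ retains $\Theta\setminus\{\alpha_2\}$. The $\Omega$-layers live in the $\mathsf A_5$-subsystem $\Delta_2$ and are cleared as above, while the single coordinate $\alpha_2$ is emptied by one further conjugation along the $\widetilde\pi$-orbit of $\alpha_2$ in $\Theta$ (pushing it into $\widetilde\pi^{-1}\alpha_2\in\Theta\setminus\{\alpha_2\}$, a retained slot), the extra bookkeeping being to reconcile this step with the layer reduction.
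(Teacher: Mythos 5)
Your proposal is correct and takes essentially the same route as the paper: peel off the layers $\Omega_N,\Omega_{N-1},\ldots$ from the top, using that conjugation by $\pi$ pushes $U(\Omega_k)$ into $U(\Omega_{k-1})$ and that $\Theta\cup\bigcup_{k=0}^{m}\Omega_k$ is closed, with one extra step for $\alpha_2$ in $\mathsf{E}_6$. The paper sidesteps your ``triangular system'' entirely by writing $u=\theta v$ with the whole $\Omega_m$-part $\theta$ collected at the front and conjugating by $\theta^{-1}$, so the top layer cancels exactly and no height filtration is needed (also, the new $\mathsf{E}_6$ coordinate lands at $\widetilde\pi\alpha_2$ rather than $\widetilde\pi^{-1}\alpha_2$, which is immaterial).
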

\begin{proof}
Consider $\Omega_k$ for $w=\widetilde{\pi}$ (see Definition~\ref{def:weyl-trajectories}) and denote by $N$ the maximal natural number such that $\Omega_N\neq\varnothing$.

Write $u$ as a product $\theta v$, where $\theta=\prod_{\alpha\in\Omega_N}x_\alpha(c_\alpha)$ and $v\in E(\Phi^+\setminus\Omega_N)$. Consider the conjugate $\theta^{-1}u\pi\theta$. It follows from Remark~\ref{rmk:R3} that for $\alpha\in\Omega_N$ $\pi x_\alpha(c_\alpha)\pi^{-1}\in E(\Omega_{N-1})\subset E(\Phi^+\setminus\Omega_N)$, and thus $\pi\theta\in E(\Phi^+\setminus\Omega_N)\pi$, so $\theta^{-1}u\pi\theta=u'\pi$ for some $u'\in E(\Phi^+\setminus\Omega_N)$, since the latter set of roots is closed.

Now we can rewrite $u'$ as a product $\theta'v'$, where $\theta'=\prod_{\alpha\in\Omega_{N-1}}x_\alpha(c_\alpha)$ and $v'\in E(\Phi^+\setminus(\Omega_N\cup\Omega_{N-1}))$. Repeat the previous step to get an element of the form $u''\pi$ with $u''\in E(\Phi^+\setminus(\Omega_N\cup\Omega_{N-1}))$.

Repeating this procedure $N-1$ times, we eventually get an element of the form $u\pi$ with $u\in E(\Phi^+\setminus\cup_{k>0}\Omega_k)=E(\Theta\cup\Omega_0)$.

Since $\Theta\cup\Omega_0$ coincides with $\Sigma$ in all cases except $\mathsf{E}_6$, we are almost done. For $\Phi=\mathsf{E}_6$ one additionally has to eliminate $x_{\alpha_2}(*)$ in the same way (which gives the closed set $\Sigma$).
\end{proof}
A lift $w$ of a Coxeter element to the extended Weyl group is essentially a set of signs $s_i^w=\pm1$, assigned to the fundamental roots $\alpha_i$.
\begin{lemma}\label{lemma:coxeter-lifts-conjugate}
Let $\Phi\neq\mathsf{A}_\ell,\mathsf{D}_\ell,\mathsf{E}_7$ and $w_1,w_2$ be two lifts of a single Coxeter element to the extended Weyl group $N(\Phi,\mathbb{Z})$. Then $w_1$ and $w_2$ are conjugated under the action of $H(\Phi,\mathbb{Z})$.
\end{lemma}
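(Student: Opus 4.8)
The plan is to reduce the statement to linear algebra over $\mathbb{F}_2$. Fix the Coxeter element as $\widetilde\pi=\sigma_{i_1}\cdots\sigma_{i_\ell}$, with each simple reflection occurring exactly once, so that a lift has the shape $w_{i_1}(s_1)\cdots w_{i_\ell}(s_\ell)$ and is completely recorded by its sign vector $(s_1,\ldots,s_\ell)\in\{\pm1\}^\ell$. Because the coroots $(\sigma_{i_1}\cdots\sigma_{i_{k-1}}\alpha_{i_k})^\vee$ collected along the word form a $\mathbb{Z}$-basis of the coroot lattice (the transition from the simple coroots being unitriangular), the $2^\ell$ sign vectors are pairwise distinct and exhaust the lifts, which therefore form a single torsor under $H(\Phi,\mathbb{Z})\cong\mathbb{F}_2^{\,\ell}$. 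Hence it suffices to prove that the conjugation action of $H(\Phi,\mathbb{Z})$ on this torsor is transitive, i.e. that from $\pi$ (all signs $+1$) every sign vector can be reached.

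I would then make the action completely explicit via Remark~\ref{rmk:R3}. Since $h_{\alpha_j}(-1)\,w_{\alpha_i}(s)\,h_{\alpha_j}(-1)^{-1}=w_{\alpha_i}\bigl((-1)^{\langle\alpha_i,\alpha_j\rangle}s\bigr)$ and conjugation distributes over the defining product, conjugating a lift by $h_{\alpha_j}(-1)$ multiplies each $s_i$ by $(-1)^{\langle\alpha_i,\alpha_j\rangle}$. Writing the signs additively in $\mathbb{F}_2$, conjugation by $h_{\alpha_j}(-1)$ translates the sign vector by the mod-$2$ reduction of the $j$-th column of the Cartan matrix. Thus the set of achievable translations is exactly the $\mathbb{F}_2$-column space of the Cartan matrix, and transitivity is equivalent to the assertion that these columns span $\mathbb{F}_2^{\,\ell}$.

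This turns the lemma into a finite type-by-type check, which I would carry out on the weight diagrams already in play, reading the parities $\langle\alpha_i,\alpha_j\rangle\bmod 2$ off the bonds and, for each fundamental root, exhibiting a product of elements $h_{\alpha_j}(-1)$ whose conjugation flips precisely that sign. For the simply-laced cases $\mathsf{E}_6,\mathsf{E}_8$ and for $\mathsf{F}_4,\mathsf{G}_2$ the Cartan determinant is odd, full rank mod $2$ is automatic, and transitivity follows at once. The excluded types $\mathsf{A}_\ell,\mathsf{D}_\ell,\mathsf{E}_7$ are exactly those where the mod-$2$ column space is a proper subspace, so a lift differing from $\pi$ by a vector outside that span is genuinely not $H$-conjugate to $\pi$; this is why they are set aside and handled by the separate companion-matrix arguments alluded to in the introduction.

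The main obstacle I foresee is the multiply-laced types $\mathsf{B}_\ell$ and $\mathsf{C}_\ell$: a double bond contributes an \emph{even} Cartan entry and hence no flip, the Cartan determinant is even, and the bare span-of-columns count is therefore inconclusive, so the naive ``one node, one $h_{\alpha_j}(-1)$'' recipe cannot by itself reach every sign vector. Here one must extract finer information than the rank count — combining several torus generators running along the diagram, and using the interplay of the short and long roots together with the specific word for $\widetilde\pi$ — to realise each remaining basis flip. Pinning down this extra input and verifying it uniformly in $\ell$ is the delicate step, and I expect essentially all the real work of the proof to concentrate there.
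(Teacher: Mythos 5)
Your reduction of the lemma to the $\mathbb{F}_2$-column space of the Cartan matrix is sound, and for the types with odd Cartan determinant ($\mathsf{E}_6,\mathsf{E}_8,\mathsf{F}_4,\mathsf{G}_2$) it finishes the proof more cleanly than the paper does: the paper instead exhibits, type by type, an explicit chain of conjugations by elements $h_\gamma(\pm1)$ (with $\gamma$ ranging over various, not necessarily simple, roots), each flipping a prescribed subset of the signs, the flips being read off from the parity of $\langle\beta,\gamma\rangle$ via root strings. Up to that point the two arguments carry the same content, yours being the invariant formulation.

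The genuine gap is $\mathsf{B}_\ell$ and $\mathsf{C}_\ell$, which you leave entirely open — and which, by your own framework, cannot be closed in the way you hope. In the simply connected group every $h_\gamma(-1)$ with $\gamma\in\Phi$ equals $\prod_i h_{\alpha_i}(-1)^{m_i}$ where $\gamma^\vee=\sum_i m_i\alpha_i^\vee$, so ``combining several torus generators'' never produces a translation outside the mod-$2$ column space of the Cartan matrix, which you correctly describe as \emph{exactly} the set of achievable translations; since the Cartan determinant of $\mathsf{B}_\ell,\mathsf{C}_\ell$ is $2$, that space has index $2$ and transitivity fails. Concretely, in $\mathsf{C}_\ell$ the exponent $\langle\alpha_\ell,\gamma\rangle=2(\alpha_\ell,\gamma)/(\gamma,\gamma)$ appearing in Remark~\ref{rmk:R3} is even for every root $\gamma$ (as $\alpha_\ell=2e_\ell$ lies in twice the weight lattice), so the sign $s_\ell$ is an $H(\Phi,\mathbb{Z})$-invariant of the lift; for $\mathsf{B}_3$ one checks similarly that $s_1s_3$ is invariant. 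This puts your analysis in direct conflict with the paper's case analysis, which asserts for instance $\langle\alpha_\ell,\alpha_{\ell-1}+\alpha_\ell\rangle=1$ in type $\mathsf{C}_\ell$ — but that value is $2(\alpha_\ell,\gamma)/(\alpha_\ell,\alpha_\ell)$, the transposed pairing, not the exponent of Remark~\ref{rmk:R3} and not what the $\gamma$-string through $\alpha_\ell$ yields ($r-q=2$ there). One of the two normalizations is being misapplied, and until that is resolved your proposal does not prove the lemma for $\mathsf{B}_\ell,\mathsf{C}_\ell$; indeed it indicates the statement fails there as written. A smaller slip: the excluded types are not ``exactly'' those with deficient mod-$2$ column space, since $\mathsf{A}_{2k}$ has odd Cartan determinant while $\mathsf{B}_\ell,\mathsf{C}_\ell$, which are not excluded, have an even one.
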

\begin{proof}
 The action of an element $h\in H(\mathbb{Z})$ changes some of $s_i^w$, while leaving the others intact. It follows from Remark~\ref{rmk:R3} that $h_\alpha(-1)$ only changes signs assigned to the roots $\beta$ with odd $\langle\beta,\alpha\rangle$. The latter can be easily computed, for example, as follows: if $\beta-r\alpha,\ldots,\beta+q\alpha$ is the $\alpha$-series through $\beta$, then $\langle\beta,\alpha\rangle=r-q$.
 
 To find a suitable element of $H(\mathbb{Z})$ we have to do some case-by-case analysis. In each case we provide a procedure for transforming one set of signs ($w_1$) into another ($w_2$), that is a chain of ``elementary'' transformations $w\mapsto {}^{h_\alpha(\pm1)}w$. The signs of the current value of $w$ will be denoted simply by $s_i$.
 
 $\Phi=\mathsf{B}_\ell$, $\ell\geqslant 3$: we start with obtaining the desired value of $s_\ell$ by conjugating $w$ with $h_{\alpha_{\ell-1}}(\pm1)$. Now we take $\gamma_1=\alpha_1+\alpha_2+\ldots+\alpha_\ell$ and note that $\langle\alpha_1,\gamma_1\rangle=1$ and $\langle\alpha_\ell,\gamma_1\rangle=0$. This shows that conjugating with $h_{\gamma_1}$ allows us to change $s_1$ while not changing $s_\ell$.  Analogously, we can take $\gamma_k=\alpha_{k-1}+2\alpha_k+\ldots+2\alpha_\ell$ to change $s_k$, $k=\ell-1,\ell-2,\ldots,3$. Each of $h_{\gamma_k}$ affects only $s_k$ and $s_{k-1}$, and the latter is fixed by $h_{\gamma_{k-1}}$. The last step is to take $\gamma_2=\alpha_{\max}=\alpha_1+2\alpha_2+\ldots+2\alpha_\ell$, for $h_{\gamma_2}$ changes only $s_2$.
 
 $\Phi=\mathsf{C}_\ell$, $\ell\geqslant2$: we start with fixing $s_\ell$ by conjugating $w$ with $h_\gamma$ for $\gamma=\alpha_{\ell-1}+\alpha_\ell$ (indeed, $\langle\alpha_\ell,\gamma\rangle=1$). Now we change $s_k$, $k=1,2,\ldots,\ell-1$ by using $h_{\alpha_{k+1}}$.
 
 $\Phi=\mathsf{E}_6$: first change $s_2$ by conjugating with $h_{\alpha_4}$, then use $\alpha_1$ and $\alpha_6$ for $s_3$ and $s_5$, then $\alpha_3$ and $\alpha_5$ for $s_1$ and $s_6$, and finally $\alpha_2$ to change $s_4$.
 
 $\Phi=\mathsf{E}_8$: use $\alpha_3$ to change $s_1$, then use $\alpha_4$, $\alpha_5$, $\alpha_6$, $\alpha_7$, $\alpha_8$ to change $s_2$, $s_4$, $s_5$, $s_6$, $s_7$ and finish by using $\alpha_1$ for $s_3$ and $\alpha_{\max}$ for $s_8$.
 
 $\Phi=\mathsf{F}_4$: use $\alpha_1$ and $\alpha_2$ for $s_2$ and $s_1$, then $\alpha_3$ and $\alpha_4$ for $s_4$ and $s_3$.
 
 $\Phi=\mathsf{G}_2$: use $\alpha_1$ and $\alpha_2$ for $s_2$ and $s_1$.
\end{proof}
The failure of the above lemma for $\mathsf{E}_7$ is amusing, yet the reason is unclear.

Since for $\Phi=\mathsf{A}_\ell,\mathsf{D}_\ell,\mathsf{E}_7$ we can't use Lemma~\ref{lemma:coxeter-lifts-conjugate}, we have to do some additional calculations in these cases.

Let $w_0$ denote the longest element of the Weyl group. We write $\widehat{w_0}$ for its obvious lift to the extended Weyl group, obtained by sending each of $\sigma_i$ in the reduced expression to $w_i(1)$. We will later fix another lift in case $\Phi=\mathsf{A}_\ell$.

\begin{lemma}\label{lemma:coxeter-lifts-nice}
For $\Phi=\mathsf{A}_\ell,\mathsf{D}_\ell,\mathsf{E}_7$ one has $\widehat{w_0}w_i(1)\widehat{w_0}^{-1}=w_j(1)$, where $\alpha_j=-w_0(\alpha_i)$.
\end{lemma}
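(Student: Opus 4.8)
The plan is to avoid tracking signs through a reduced expression and instead exploit the fact that the reduced-word lift $w\mapsto\widehat{w}$ is well defined and behaves multiplicatively on length-additive factorizations. Recall (Tits) that the elements $w_i(1)$ satisfy the braid relations of $W(\Phi)$; this is precisely what makes the notation $\widehat{w_0}$ unambiguous, and more generally it lets us attach to each $w\in W(\Phi)$ a canonical representative $\widehat{w}$ by lifting any reduced word, with the property that $\widehat{uv}=\widehat{u}\,\widehat{v}$ whenever $\ell(uv)=\ell(u)+\ell(v)$ (for the Coxeter length $\ell(\cdot)$). The entire computation is then reduced to the single Weyl-group identity $w_0\sigma_i=\sigma_j w_0$, which holds because conjugation by $w_0$ realizes the diagram involution $-w_0$: the element $\sigma_j=w_0\sigma_i w_0^{-1}=\sigma_{w_0\alpha_i}=\sigma_{-\alpha_j}$ is exactly the reflection in $\alpha_j=-w_0(\alpha_i)$.

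Concretely, first I would record the two factorizations of $\widehat{w_0}$ produced by this identity. Since $w_0$ is the longest element, $\ell(w_0\sigma_i)=\ell(w_0)-1$ and $\ell(\sigma_j w_0)=\ell(w_0)-1$, so both $w_0=(w_0\sigma_i)\sigma_i$ and $w_0=\sigma_j(\sigma_j w_0)$ are length-additive. Multiplicativity of the lift then gives $\widehat{w_0}=\widehat{w_0\sigma_i}\,w_i(1)$ and $\widehat{w_0}=w_j(1)\,\widehat{\sigma_j w_0}$. Because $w_0\sigma_i=\sigma_j w_0$ as elements of $W(\Phi)$, their canonical lifts coincide; write $y:=\widehat{w_0\sigma_i}=\widehat{\sigma_j w_0}$. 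Hence $\widehat{w_0}=y\,w_i(1)=w_j(1)\,y$, from which $y\,w_i(1)\,y^{-1}=w_j(1)$.

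Finally I would substitute $\widehat{w_0}=y\,w_i(1)$ into the conjugation and let one factor of $w_i(1)$ cancel:
\[ \widehat{w_0}\,w_i(1)\,\widehat{w_0}^{-1}=y\,w_i(1)\,w_i(1)\,w_i(1)^{-1}\,y^{-1}=y\,w_i(1)\,y^{-1}=w_j(1), \]
which is the claim. I expect the one point genuinely requiring care to be a matter of justification rather than computation: one must invoke the braid relations for the $w_i(1)$ to know that the reduced-word lift $\widehat{w_0}$ is well defined and that the lift is multiplicative on length-additive factorizations. The reward is that no signs ever enter the argument — the a priori dangerous factor $w_i(1)^2=h_i(-1)$ cancels on the nose — so the proof is in fact uniform across all root systems, the three cases $\mathsf{A}_\ell$, $\mathsf{D}_\ell$, $\mathsf{E}_7$ being singled out only because they are precisely the ones left untreated by Lemma~\ref{lemma:coxeter-lifts-conjugate}. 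A more pedestrian alternative would push $w_{\alpha_i}(1)$ term by term through a reduced word for $\widehat{w_0}$ using Remark~\ref{rmk:R3}, arriving at $w_{-\alpha_j}(\pm1)=w_{\alpha_j}(\mp1)$ and then pinning down the residual sign; this sign determination is exactly the delicate step that the lift-theoretic argument sidesteps.
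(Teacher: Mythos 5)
Your argument is correct, and it is genuinely different from the paper's. The paper proves the lemma case by case: for $\mathsf{A}_\ell$ and $\mathsf{D}_\ell$ it appeals to explicit matrix computations (carried out for the orthogonal groups in the cited earlier work), and for $\mathsf{E}_7$ it describes the action of $\widetilde{W}(\Phi)$ by signed permutations on the weights of $(\mathsf{E}_7,\varpi_7)$ and declares the verification a routine, preferably computer-assisted, check. You instead derive the identity uniformly from three standard facts: the $w_i(1)$ satisfy the braid relations (Tits), hence by Matsumoto's theorem the reduced-word lift $w\mapsto\widehat{w}$ is well defined and multiplicative on length-additive factorizations; $\ell(w_0\sigma_i)=\ell(\sigma_j w_0)=\ell(w_0)-1$; and $w_0\sigma_iw_0^{-1}=\sigma_{w_0\alpha_i}=\sigma_j$. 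The resulting cancellation $\widehat{w_0}=y\,w_i(1)=w_j(1)\,y$ is airtight, and, as you observe, it in fact proves the statement for every root system, not only for $\mathsf{A}_\ell,\mathsf{D}_\ell,\mathsf{E}_7$ (which are singled out in the paper only because they are the cases where Lemma~\ref{lemma:coxeter-lifts-conjugate} is unavailable). What your route buys is the elimination of all computation and of the case distinction; what it costs is the explicit invocation of the braid relations for the $w_i(1)$ and of Matsumoto's lemma, neither of which the paper ever states — note that the paper's phrase ``the reduced expression'' tacitly presupposes exactly the well-definedness you make explicit, so your justification is needed for the paper's definition of $\widehat{w_0}$ to be unambiguous in the first place. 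The paper's computations, by contrast, are self-contained given the concrete representations. One could reasonably prefer your version as both shorter and more general.
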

\begin{proof}
For $\mathsf{A}_\ell$ and $\mathsf{D}_\ell$ this can be done by explicit matrix calculation (this is done for orthogonal group in \cite{ArlVasYouCommUnitary} and immediately follows for Spin group, since the central factor doesn't play any role).

For $E_7$ it is not a good idea to write down matrices, but one can do something very similar. Namely, after choosing a positive basis for the microweight representation $(\mathsf{E}_7,\varpi_7)$ one has very simple and explicit description of the action of $\widetilde{W}(\Phi)$.

Let $\Lambda$ denote the set of weights, $\alpha$ a fundamental root, then
\[ w_\alpha(1)v^\lambda=
\begin{cases}
v^\lambda, & \text{if }\lambda\pm\alpha\notin\Lambda,\\
v^{\lambda+\alpha}, & \text{if }\lambda+\alpha\in\Lambda,\\
-v^{\lambda-\alpha}, & \text{if }\lambda-\alpha\in\Lambda.
\end{cases} \]
Now elements of $\widetilde{W}(\Phi)$ act by signed permutation on $\Lambda$, so it is a routine to check that $\widehat{w_0}w_i(1)=w_i(1)\widehat{w_0}$ for each $i$. It is, however, much less amusing, so the author advises to put it into a computer instead.
\end{proof}
As a corollary, we see that in case $\Phi=\mathsf{D}_\ell$ one has $\widehat{w_0}\pi\widehat{w_0}=\pi$, since $w_\ell(1)$ and $w_{\ell-1}(1)$ commute.

Let $p_n$ denote the $n\times n$ peridentity matrix, that is $p_n=(\delta_{i,n-j})_{i,j=1,\ldots,n}$. If $n\neq4k+2$, then either $\det(p_n)=1$ or $\det(-p_n)=1$, so for $SL(n,R)=G(\mathsf{A}_{n-1},R)$ we specifically fix $\widehat{w_0}$ to be $p_n$ or $-p_n$, which is not the obvious lift of $w_0$. But with this choice one has
\[ \widehat{w_0}\cdot w_\ell(-1)\ldots w_1(-1)\cdot\widehat{w_0}^{-1}=w_1(1)\ldots w_\ell(1), \]
which can be written simply as $\widehat{w_0}\pi^{-1}\widehat{w_0}^{-1}=\pi$.

For $n=4k+2$ neither $p_n$ nor $-p_n$ lies in $SL(n,R)$. In this case we set $\widehat{w_0}$ to be an anti-diagonal matrix with $1$'s and $-1$'s alternating. Then $\widehat{w_0}\pi^{-1}\widehat{w_0}^{-1}=-\pi$.

\begin{lemma}\label{lemma:inverse-of-companion}
Let $\Phi=\mathsf{A}_\ell$, $\ell\neq4k+1$ or $\Phi=\mathsf{E}_6$. If $x$ is similar to a companion matrix, then so is $x^{-1}$.
\end{lemma}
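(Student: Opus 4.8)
The starting point is the trivial rewriting: if $x\sim u\pi$ with $u\in U(\Sigma)$, then $x^{-1}\sim(u\pi)^{-1}=\pi^{-1}u^{-1}\sim u^{-1}\pi^{-1}$, the last step by conjugating with $\pi$. Since $\widetilde\pi^{-1}$ is again a Coxeter element --- of $\Phi$ itself when $\Phi=\mathsf{A}_\ell$, and of the subsystem $\Delta_2\cong\mathsf{A}_5$ when $\Phi=\mathsf{E}_6$ --- with the same $\Theta$ and an analogous $\Omega$-filtration, the argument of Lemma~\ref{lemma:conjugate-to-companion} applies verbatim to $\widetilde\pi^{-1}$. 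As $u^{-1}\in U(\Sigma)\subseteq U^+$, it follows that $x^{-1}$ is similar to a companion matrix with respect to $\widetilde\pi^{-1}$. The whole statement therefore reduces to showing that every $\widetilde\pi^{-1}$-companion is similar to a $\widetilde\pi$-companion.

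One is tempted to settle this by a single conjugation. Indeed, for $\mathsf{A}_\ell$ the relation $\widehat{w_0}\pi^{-1}\widehat{w_0}^{-1}=\pi$ fixed in the discussion above, and for $\mathsf{E}_6$ the identity $w_0\widetilde\pi w_0^{-1}=\tau\widetilde\pi\tau^{-1}=\widetilde\pi^{-1}$ (where $w_0=-\tau$ and $\tau$ is the diagram automorphism) together with Lemma~\ref{lemma:coxeter-lifts-conjugate}, produce an element $g$ with $g\pi^{-1}g^{-1}=\pi$. However, $g$ necessarily maps to $w_0$ in $W(\Phi)$, so it carries $U(\Sigma)\subseteq U^+$ into $U(w_0\Sigma)\subseteq U^-$; conjugating $u^{-1}\pi^{-1}$ by $g$ only yields a \emph{lower} companion $v\pi$ with $v\in U(w_0\Sigma)$, and the essential difficulty is exactly to straighten such a lower companion into an honest (upper) one. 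No conjugation alone can do this, since every $g$ conjugating $\pi^{-1}$ to $\pi$ shares this defect.

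For $\Phi=\mathsf{A}_\ell=SL_n$, $n=\ell+1$, I would instead argue through cyclic vectors. A matrix is similar to a companion matrix precisely when it is non-derogatory, and the (signed) subdiagonal structure of $\pi$ makes $u\pi$ non-derogatory, so $x$ admits a cyclic vector $e$, i.e. $e,xe,\ldots,x^{n-1}e$ is a basis of $R^n$. Because $\det x=1$, Cayley--Hamilton gives $x^{-1}\in R[x]$, hence $R[x]=R[x^{-1}]$; therefore $e,x^{-1}e,\ldots,x^{-(n-1)}e$ also generate $R^n$, and $n$ generators of a free module of rank $n$ form a basis (a surjective endomorphism of a finitely generated module over a commutative ring is injective). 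Thus $x^{-1}$ is non-derogatory, hence similar to the companion matrix of its characteristic polynomial, a matrix of determinant $\det x^{-1}=1$. Rewriting the latter as $u'\pi$ with $u'\in U(\Sigma)$ is where $n\neq4k+2$ enters, through the same sign/determinant matching that governs the choice $\widehat{w_0}=\pm p_n$ (cf. Remark~\ref{rmk:conjugate-to-companion-minus-a4k+1}); for $n=4k+2$ the determinant forces the opposite sign and the matching breaks down, which is why the case is excluded.

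For $\Phi=\mathsf{E}_6$ the cyclic-vector device is unavailable and the subsystem $\Delta_2\cong\mathsf{A}_5$ is itself of the excluded type $\mathsf{A}_{4k+1}$, so restriction to $\Delta_2$ is not enough. Here I would pass to the minuscule representation $(\mathsf{E}_6,\varpi_1)$, in which $\widetilde W(\Phi)$ acts by signed permutations of the $27$ weights exactly as in Lemma~\ref{lemma:coxeter-lifts-nice}; using $w_0=-\tau$ one transports the $\widetilde\pi^{-1}$-companion to a $\widetilde\pi$-companion and reads off the straightening directly, the $\mathsf{A}_5$-discrepancy being absorbed by the extra unipotent part indexed by $\Sigma_2\setminus\{\alpha_2\}$ (and quantified by Remark~\ref{rmk:inverse-of-companion-a4k+1}). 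In all cases the crux --- and the source of the hypothesis $\ell\neq4k+1$ --- is this final straightening of a lower companion into an upper one, where the determinant bookkeeping must come out right.
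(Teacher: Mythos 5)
Your opening reduction is fine, and you correctly isolate the real issue: after inverting, one must pass from a $\widetilde\pi^{-1}$-companion back to a $\widetilde\pi$-companion, and a naive conjugation by a lift of $w_0$ produces a ``lower'' companion $v\pi$ with $v\in U(w_0\Sigma)\subseteq U^-$. But your assertion that ``no conjugation alone can do this'' is false, and it sends the rest of the argument off in the wrong direction. Straightening by conjugation is exactly what the paper does: for $\Phi=\mathsf{A}_\ell$ it fixes $\widehat{w_0}=\pm p_n$ --- a choice available in $SL_n$ precisely because $\ell\neq 4k+1$, which is the \emph{only} place that hypothesis enters --- so that $\widehat{w_0}\pi^{-1}u^{-1}\widehat{w_0}^{-1}=\pi u'$ with $u'\in U(w_0\Sigma_\ell)=U(-\Sigma_1)$, and then uses $w_0\Sigma_\ell=-\Sigma_1=\widetilde\pi\Sigma_\ell$ to commute the unipotent part past $\pi$ back towards $U(\Sigma_\ell)$ (with, if one checks the root bookkeeping carefully, a further round of the elimination procedure of Lemma~\ref{lemma:conjugate-to-companion}, which conjugates by root elements $x_\beta(t)$ and moves roots by $\widetilde\pi$). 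For $\mathsf{E}_6$ the paper conjugates not by a lift of the full longest element but by a lift of the longest element $v_0$ of the subsystem $\Delta_2$, which stabilizes $\Sigma_2$ and agrees with $\widetilde\pi$ on $\Sigma_6\cap\Delta_2$, then normalizes the resulting lift of $\widetilde\pi$ via Lemma~\ref{lemma:coxeter-lifts-conjugate} and removes $x_{\alpha_2}(*)$ as in Lemma~\ref{lemma:conjugate-to-companion}. All conjugators stay in $E(\Phi,R)$, which is essential for the application.

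The substitutes you offer do not close the gap. The cyclic-vector argument is correct as far as it goes ($x^{-1}\in R[x]$ by Cayley--Hamilton, so a cyclic vector for $x$ is one for $x^{-1}$), but it produces a change of basis in $GL(n,R)$ with no control over its determinant, let alone membership in $E(n,R)$; the lemma is invoked in Lemma~\ref{lemma:conjugate-to-companion-minus} and Theorem~\ref{thm:comm-width-sr1} with conjugators in $E(\Phi,R)$, and the lemma itself carries no stable rank hypothesis, so one cannot pass from $GL$ or $SL$ to $E$ over a general commutative ring. Worse, the one step where $\ell\neq 4k+1$ must actually be used --- rewriting the resulting companion form as $u'\pi$ for the \emph{fixed} lift $\pi$ of the \emph{fixed} Coxeter element --- is precisely the step you defer to an unspecified ``sign/determinant matching.'' The $\mathsf{E}_6$ paragraph is a statement of intent rather than an argument, and its appeal to Remark~\ref{rmk:inverse-of-companion-a4k+1} is circular: that remark concerns $\mathsf{A}_{4k+1}$ and is itself proved by repeating the proof of the present lemma.
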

\begin{proof}
Write $\eta x\eta^{-1}=u\pi$ for some $u\in U(\Sigma)$, so $\eta x^{-1}\eta^{-1}=\pi^{-1}u^{-1}$.

In case $\Phi=\mathsf{A}_\ell$, $\ell\neq4k+1$ conjugate it with $\widehat{w_0}$ to get $\pi u'$, where $u'\in U(w_0\Sigma)$. Note that $w_0\Sigma_\ell=-\Sigma_1=\widetilde{\pi}\Sigma_\ell$, thus $\pi u'=u''\pi$ for some $u''\in U(\Sigma_\ell)$.

In case $\Phi=\mathsf{E}_6$ take $v_0$ to be the longest element of $\Delta_2$ and $\widehat{v_0}$ its lift to the torus normalizer. Conjugate $\pi^{-1}u^{-1}$ with $\widehat{v_0}$ to get $\rho u'$, where $u'\in U(v_0\Sigma)$ and $\rho$ is a (probably different) lift of $\widetilde{\pi}$, and with $\rho^{-1}$ to get $u'\rho$. Conjugating it, if necessary, with a suitable element of $H(\mathbb{Z})$ (see Lemma~\ref{lemma:coxeter-lifts-conjugate}), we can assume $\rho=\pi$. Since $v_0\left(\Sigma_6\cap\Delta_2\right)=\widetilde{\pi}\left(\Sigma_6\cap\Delta_2\right)=-\Sigma_1\cap\Delta_2$ and $v_0\Sigma_2=\widetilde{\pi}\Sigma_2=\Sigma_2$, one has $u'\pi=\pi u''$ for $u''\in U(\Sigma\cup\{\alpha_2\})$, which is conjugated with $u''\pi$. It remains to conjugate it with $x_{\alpha_2}(*)$ as in the proof of Lemma~\ref{lemma:conjugate-to-companion}.
\end{proof}

\begin{remark}\label{rmk:inverse-of-companion-a4k+1}
Let $\Phi=\mathsf{A}_{4k+1}$. If $x$ is similar to a companion matrix, then $x^{-1}$ is similar to a minus companion matrix.
\end{remark}
\begin{proof}
Repeat the proof of Lemma~\ref{lemma:inverse-of-companion}, now using $\widehat{w_0}\pi^{-1}\widehat{w_0}^{-1}=-\pi$.
\end{proof}

\begin{lemma}\label{lemma:conjugate-to-companion-minus}
Let $\Phi\neq\mathsf{A}_{4k+1}$. For any $v\in U^-(\Phi)$ exists $\eta\in E(\Phi)$ such that $\eta v\pi\eta^{-1}$ is a companion matrix.
\end{lemma}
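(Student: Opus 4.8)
The plan is to reduce the statement about $v \in U^-(\Phi)$ to the already-established statement about $U^+(\Phi)$ in Lemma~\ref{lemma:conjugate-to-companion}. The natural tool is conjugation by a lift of the longest element $w_0$, since $w_0$ interchanges $\Phi^+$ and $\Phi^-$ and hence should carry $U^-$ to $U^+$. Concretely, I would write $\eta_0 = \widehat{w_0}$ (the lift fixed before this lemma) and consider $\widehat{w_0}\, v\pi\, \widehat{w_0}^{-1}$. Since $\widehat{w_0}$ normalizes the torus and conjugates root subgroups $x_\beta(t)$ to $x_{w_0\beta}(\pm t)$, and $w_0$ sends $\Phi^-$ to $\Phi^+$, the factor $\widehat{w_0}\,v\,\widehat{w_0}^{-1}$ lands in $U^+(\Phi)$. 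The remaining factor is $\widehat{w_0}\,\pi\,\widehat{w_0}^{-1}$, which by the normalization identities recorded just above (and by Lemmas~\ref{lemma:coxeter-lifts-conjugate} and~\ref{lemma:coxeter-lifts-nice}) is again a lift of $\widetilde\pi$, so the conjugate has the shape $u'\pi'$ with $u' \in U^+(\Phi)$ and $\pi'$ a lift of the same Coxeter-type element.

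Once I have $u'\pi'$ with $u' \in U^+$, the second step is to arrange that $\pi' = \pi$. For the types where all lifts of a Coxeter element are $H(\mathbb{Z})$-conjugate, namely $\Phi \neq \mathsf{A}_\ell, \mathsf{D}_\ell, \mathsf{E}_7$, I would simply conjugate further by a suitable $h \in H(\mathbb{Z})$ supplied by Lemma~\ref{lemma:coxeter-lifts-conjugate} to replace $\pi'$ by $\pi$ without disturbing membership of the unipotent factor in $U^+$ (conjugation by a torus element preserves each root subgroup). For $\mathsf{D}_\ell$ I expect to use the corollary to Lemma~\ref{lemma:coxeter-lifts-nice}, namely $\widehat{w_0}\pi\widehat{w_0}^{-1} = \pi$, which makes $\pi' = \pi$ automatic. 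For $\mathsf{A}_\ell$ with $\ell \neq 4k+1$ I would exploit the specially chosen $\widehat{w_0} = \pm p_n$ satisfying $\widehat{w_0}\pi^{-1}\widehat{w_0}^{-1} = \pi$, which is why the case $\mathsf{A}_{4k+1}$ (where only $-\pi$ is reachable) is explicitly excluded in the hypothesis.

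Having reduced to an element of the form $u''\pi$ with $u'' \in U^+(\Phi)$, the third and final step is immediate: apply Lemma~\ref{lemma:conjugate-to-companion} to find $\eta_1 \in U^+(\Phi)$ conjugating $u''\pi$ to a companion matrix, and compose the two conjugations to obtain the desired $\eta \in E(\Phi)$. Note that the conclusion only asks for $\eta \in E(\Phi)$ rather than $\eta \in U^+$, which is exactly why we are free to use $\widehat{w_0}$ and the torus elements along the way.

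The main obstacle I anticipate is the bookkeeping in the intermediate step of pinning down $\pi'$ and confirming that the unipotent factor genuinely stays inside $U^+$ after each conjugation. The conjugation by $\widehat{w_0}$ is clean, but verifying that $\widehat{w_0}\pi\widehat{w_0}^{-1}$ is a lift of $\widetilde\pi$ (and not of some other element) relies on the precise relation $-w_0(\alpha_i) = \alpha_{\tau(i)}$ for the diagram automorphism $\tau$, together with the reversal of the Coxeter word; for the $\mathsf{A}_\ell$ and $\mathsf{E}_7$ cases this is precisely the content of Lemma~\ref{lemma:coxeter-lifts-nice}, while for $\mathsf{D}_\ell$ the commuting of $w_\ell(1)$ and $w_{\ell-1}(1)$ must be invoked. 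The sign subtleties (which fundamental roots pick up a $-1$ under $\widehat{w_0}$-conjugation) are where a careless argument would go wrong, and they are exactly what forces the exclusion of $\mathsf{A}_{4k+1}$, handled separately via Remark~\ref{rmk:inverse-of-companion-a4k+1}.
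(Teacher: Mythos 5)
Your overall strategy---conjugate by $\widehat{w_0}$ to move $v$ into $U^+$, identify what happens to $\pi$, then invoke Lemma~\ref{lemma:conjugate-to-companion}---matches the paper's for the types $\mathsf{B}_\ell,\mathsf{C}_\ell,\mathsf{D}_\ell,\mathsf{E}_7,\mathsf{E}_8,\mathsf{F}_4,\mathsf{G}_2$, where $w_0\widetilde\pi w_0=\widetilde\pi$ and the conjugated lift can indeed be normalized back to $\pi$ by Lemma~\ref{lemma:coxeter-lifts-nice} or by an $H(\mathbb{Z})$-conjugation from Lemma~\ref{lemma:coxeter-lifts-conjugate}. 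For those cases your argument is fine.

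There is, however, a genuine gap for $\Phi=\mathsf{A}_\ell$ and $\Phi=\mathsf{E}_6$. In these types the longest element sends $\widetilde\pi$ to $\widetilde\pi^{-1}$ (for $\mathsf{A}_\ell$ this is exactly the identity you quote, $\widehat{w_0}\pi^{-1}\widehat{w_0}^{-1}=\pi$, read backwards: $\widehat{w_0}\pi\widehat{w_0}^{-1}=\pi^{-1}$). Since $\widetilde\pi$ has order $\ell+1\geqslant3$ (resp.\ order $6$ inside the $\mathsf{A}_5$-subsystem of $\mathsf{E}_6$), $\widetilde\pi^{-1}\neq\widetilde\pi$, so $\widehat{w_0}\,v\pi\,\widehat{w_0}^{-1}$ has the form $u'\rho^{-1}$ with $\rho$ a lift of $\widetilde\pi$ --- it is \emph{not} of the form $u''\pi'$ with $\pi'$ a lift of $\widetilde\pi$. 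No conjugation by an element of $H(\mathbb{Z})$ can repair this, because torus conjugation does not change the image in the Weyl group; in particular your plan of ``replace $\pi'$ by $\pi$ via Lemma~\ref{lemma:coxeter-lifts-conjugate} and then apply Lemma~\ref{lemma:conjugate-to-companion}'' breaks down, and your grouping of $\mathsf{E}_6$ with the $w_0=-1$ types is incorrect. The missing idea is Lemma~\ref{lemma:inverse-of-companion}: the paper conjugates $u'\rho^{-1}$ by $\rho^{-1}$ to obtain $\rho^{-1}u'$, recognizes this as the inverse of $u'^{-1}\rho\in U^+(\Phi)\rho$, which is similar to a companion matrix by Lemma~\ref{lemma:conjugate-to-companion}, and then concludes by the inverse-of-companion lemma. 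You cite Remark~\ref{rmk:inverse-of-companion-a4k+1} only in connection with the excluded case $\mathsf{A}_{4k+1}$, but the inverse-of-companion step is in fact essential for all of $\mathsf{A}_\ell$ ($\ell\neq4k+1$) and for $\mathsf{E}_6$; without it your proof does not go through for these types.
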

\begin{proof}
Note that $\widehat{w_0}v\widehat{w_0}^{-1}\in U^+(\Phi)$.

In case $\Phi=\mathsf{B}_\ell,\mathsf{C}_\ell,\mathsf{D}_\ell,\mathsf{E}_7,\mathsf{E}_8,\mathsf{F}_4,\mathsf{G}_2$ one has $w_0\widetilde{\pi}w_0=\widetilde{\pi}$ and thus $\widehat{w_0}\pi \widehat{w_0}^{-1}=\rho$ for some lift $\rho$. This lift is either equal to $\pi$ (in cases $\mathsf{D}_\ell,\mathsf{E}_7$ by Lemma~\ref{lemma:coxeter-lifts-nice}) or can be transformed to $\pi$ by conjugating with an element of $H(\mathbb{Z})$ (in all other cases by Lemma~\ref{lemma:coxeter-lifts-conjugate}). Then one applies Lemma \ref{lemma:conjugate-to-companion}.

If $\Phi=\mathsf{A}_\ell$, $\ell\neq4k+1$ or $\Phi=\mathsf{E}_6$, the longest element sends $\widetilde{\pi}$ to its inverse, so $\widehat{w_0}\pi\widehat{w_0}^{-1}=\rho^{-1}$. Thus $\widehat{w_0}v\pi\widehat{w_0}^{-1}\in U^+(\Phi)\rho^{-1}$ and
\[ \rho^{-1}\widehat{w_0}v\pi\widehat{w_0}^{-1}\rho\in\rho^{-1}U^+(\Phi), \]
which is by Lemma~\ref{lemma:inverse-of-companion} similar to a companion matrix as the inverse of an element from $U^+(\Phi)\rho$ (in case $\Phi=\mathsf{A}_\ell$ one can assume $\rho=\pi$ as in the proof of Lemma~\ref{lemma:inverse-of-companion}, while in case $\Phi=\mathsf{E}_6$ one applies Lemma~\ref{lemma:coxeter-lifts-conjugate}).
\end{proof}

\begin{remark}\label{rmk:conjugate-to-companion-minus-a4k+1}
Let $\Phi=\mathsf{A}_{4k+1}$. For any $v\in U^-(\Phi)$ exists $\eta\in E(\Phi)$ such that $\eta v\pi\eta^{-1}$ is a minus companion matrix.
\end{remark}
\begin{proof}
Repeat the proof of Lemma~\ref{lemma:conjugate-to-companion-minus}, using Remark~\ref{rmk:inverse-of-companion-a4k+1} instead of Lemma~\ref{lemma:inverse-of-companion}.
\end{proof}

\section{Proof of the main result}
\begin{lemma}\label{lemma:g-minus-one}
For the vector representation $(\mathsf{A}_\ell,\varpi_1)$ there exists an element $g\in E(\ell+1,R)$, such that $g-1\in E(\ell+1,R)$.
\end{lemma}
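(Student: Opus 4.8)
The plan is to take $g$ to be a companion matrix, so that $g\in E(\ell+1,R)$ comes for free, and then to tune its characteristic polynomial so that $g-1$ again has determinant $1$. Write $n=\ell+1$. Every companion matrix $u\pi$ lies in $E(\ell+1,R)$, since $\pi$ is a product of the $w_i(1)$ and $u\in U(\Sigma_\ell)$, and it has determinant $1$. In the vector representation $\pi$ acts as a signed cyclic shift while $u\in U(\Sigma_\ell)$ alters only the entries of the last column, so $u\pi$ is an ordinary companion matrix; as $u$ varies, its characteristic polynomial $p(\lambda)=\lambda^n+c_{n-1}\lambda^{n-1}+\dots+c_0$ ranges over all monic polynomials whose constant term is pinned by $\det(u\pi)=1$ to the value $c_0=(-1)^n$.

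I would then choose $p(\lambda)=\lambda^n-\lambda+(-1)^n$ and let $g=u\pi$ be a companion matrix realizing it. Since $\det(g-1)=(-1)^n\,p(1)$ and $p(1)=1-1+(-1)^n=(-1)^n$, one gets $\det(g-1)=1$, so that $g-1\in SL(\ell+1,R)$. It then remains only to upgrade this determinant condition to actual membership in the elementary subgroup.

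For even $n$ this is immediate: by Cayley--Hamilton $g^n=g-(-1)^n=g-1$, hence $g-1=g^n\in E(\ell+1,R)$ with no extra hypothesis. For odd $n$ the same identity only produces $g+1$, so the power trick breaks down, and one instead invokes $SL(\ell+1,R)=E(\ell+1,R)$, which holds because $R$ has stable rank $1$. I expect this last passage to be the main obstacle: the matrix $g-1$ is not of the form $u\pi$, so Lemma~\ref{lemma:conjugate-to-companion} does not bear on it directly, and the step from $\det(g-1)=1$ to $g-1\in E(\ell+1,R)$ genuinely rests on the stable rank $1$ assumption in the odd case, being unconditional only thanks to the Cayley--Hamilton shortcut when $n$ is even.
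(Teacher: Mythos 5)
Your even-dimensional case is correct and genuinely different from the paper's argument: the companion matrix of $\lambda^n-\lambda+1$ satisfies $g^n=g-1$ by Cayley--Hamilton, so $g-1\in E(\ell+1,R)$ over an arbitrary commutative ring, with no extra hypotheses. The odd case, however, has a real gap. The lemma carries no hypothesis on $R$, and it cannot: it feeds into Lemma~\ref{lemma:companion-as-commutators}, which is likewise stated for arbitrary commutative rings and is applied in the final remarks to $E(\Phi,\mathbb{Z}[1/p])$ --- a ring the paper explicitly treats as \emph{not} covered by the stable rank $1$ theorem. Your step from $\det(g-1)=1$ to $g-1\in E(\ell+1,R)$ via ``$SL=E$ under stable rank $1$'' therefore proves a weaker statement than the one asserted; over a general commutative ring $\det(g-1)=1$ only puts $g-1$ in $SL(\ell+1,R)$. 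Note also that the natural repair within your framework fails: for odd $n$ the choice $p(\lambda)=\lambda^n+\lambda-1$ has the required constant term $(-1)^n$ and gives $g^n=-(g-1)$, but then $\det(g-1)=(-1)^np(1)=-1$, so no characteristic polynomial makes the Cayley--Hamilton shortcut produce $g-1$ itself in odd size.

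The paper sidesteps all of this by exhibiting two fixed integer matrices
$g_1=\left(\begin{smallmatrix}1&-1\\1&0\end{smallmatrix}\right)\in SL(2,\mathbb{Z})$ and
$g_2=\left(\begin{smallmatrix}1&0&1\\1&1&0\\0&1&0\end{smallmatrix}\right)\in SL(3,\mathbb{Z})$
with $g_i-1$ again of determinant $1$; since $SL(n,\mathbb{Z})=E(n,\mathbb{Z})$ and $E(n,\mathbb{Z})$ maps into $E(n,R)$ for every commutative $R$, both $g$ and $g-1$ land in $E(\ell+1,R)$ after assembling a block-diagonal combination (every $n\geqslant2$ is of the form $2a+3b$). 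You could salvage your odd case the same way, taking the block sum of one such $3\times3$ integer block with an even-size companion block handled by your Cayley--Hamilton argument. Incidentally, the paper's $g_1$ has characteristic polynomial $\lambda^2-\lambda+1$, so it is exactly your construction for $n=2$.
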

\begin{proof}
For $\ell=1$ and $\ell=2$ such elements are delivered by the matrices
\[
g_1=\begin{pmatrix}1&-1\\1&0\end{pmatrix},\quad
g_2=\begin{pmatrix}1&0&1\\1&1&0\\0&1&0\end{pmatrix},
\]
and for arbitrary $\ell$ one simply composes them into a block diagonal matrix.
\end{proof}
\begin{lemma}\label{lemma:companion-as-commutators}
If $\rk\Phi\geqslant2$ and $\Phi\neq\mathsf{C}_\ell$, then every element $\theta\in U(\Sigma)$ is a product of at most $N$ commutators, where
\begin{itemize}
\item $N=1$ in case $\Phi=\mathsf{A}_\ell,\mathsf{F}_4,\mathsf{G}_2$;
\item $N=2$ in case $\Phi=\mathsf{B}_\ell,\mathsf{C}_\ell,\mathsf{D}_\ell,\mathsf{E}_7,\mathsf{E}_8$;
\item $N=3$ in case $\Phi=\mathsf{E}_6$.
\end{itemize}
\end{lemma}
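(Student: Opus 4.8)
The plan is to use that $U(\Sigma)$ is abelian in every case except $\mathsf{E}_6$, and that for any subgroup $L$ normalising an abelian subgroup $M\leqslant U^+$ the conjugation action turns $M$ into an $L$-module for which
\[ [g,u]=\left(gug^{-1}\right)u^{-1}=(g-1)\cdot u,\qquad g\in L,\ u\in M, \]
written additively. Consequently, as soon as $g-1$ acts invertibly on $M$, the map $u\mapsto[g,u]$ is surjective and a single commutator already represents an arbitrary element of $M$. The device that produces such a $g$ with no assumption on the units of $R$ is Lemma~\ref{lemma:g-minus-one}: whenever $M$ is the vector representation $R^{m}$ of a subsystem subgroup of type $\mathsf{A}_{m-1}$, the explicit block element constructed there has $g-1$ invertible on $M$.

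I would first run this in the model case $\Phi=\mathsf{A}_\ell$, where $\Sigma=\Sigma_\ell$ and $U(\Sigma)$ is literally the last column $R^{\ell}$, the vector representation of the Levi $E(\Delta_\ell)\cong E(\ell,R)$ of type $\mathsf{A}_{\ell-1}$. Feeding the element of Lemma~\ref{lemma:g-minus-one} for $(\mathsf{A}_{\ell-1},\varpi_1)$ (available since $\rk\Phi=\ell\geqslant2$) into the identity above exhibits every $\theta$ as one commutator, so $N=1$. The same template applies whenever the weight diagram presents $U(\Sigma)$ as a single principal $\mathsf{A}$-type layer with no leftover direction; this is the situation for $\mathsf{F}_4$ and $\mathsf{G}_2$, where the single layer is produced by one commutator (combining, if need be, the diagonal part of $g$ with the Chevalley commutator formula), again giving $N=1$.

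For $\Phi=\mathsf{B}_\ell,\mathsf{D}_\ell,\mathsf{E}_7,\mathsf{E}_8$ the diagram shows $U(\Sigma)$ still abelian but split into a principal vector-representation layer and exactly one residual root subgroup $x_\delta(\ast)$ fixed by the $\mathsf{A}$-type element: the short fundamental root $\alpha_\ell$ for $\mathsf{B}_\ell$, the second fork root for $\mathsf{D}_\ell$, the extreme node for $\mathsf{E}_7,\mathsf{E}_8$. I would clear the principal layer with one commutator as above, then the residual factor with a second: choosing $\delta=\beta+\beta'$ with $\beta,\beta'\in\Phi$, the commutator $[x_\beta(t),x_{\beta'}(\pm1)]$ has $\pm t$ in the $\delta$-component because there the structure constant is $\pm1$ (these residual roots being either simply-laced or short), while any further factor it produces lies again in the principal layer and is absorbed into the first commutator. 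This yields $N=2$. The case $\mathsf{E}_6$ carries, besides the $\mathsf{A}_5$-vector-representation layer coming from $\Sigma_6\cap\Delta_2$, the additional non-abelian piece $\Sigma_2\setminus\{\alpha_2\}$ stemming from $\Theta^{\widetilde\pi}=\Sigma_2$; clearing its two graded levels costs two more commutators, giving $N=3$.

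The main obstacle is that $\Sigma=\Omega^{\widetilde\pi}_0$ (enlarged by part of $\Theta^{\widetilde\pi}$ when $\Phi=\mathsf{E}_6$) is the Coxeter companion set rather than the unipotent radical of a standard parabolic, so neither the module decomposition nor the acting $\mathsf{A}$-type subgroup comes for free: the heart of the proof is the type-by-type reading of the weight diagrams that isolates the principal layer on which Lemma~\ref{lemma:g-minus-one} bites, pins down the residual root subgroups, and keeps the first commutator from disturbing the residual direction. This last step is exactly what fails for $\Phi=\mathsf{C}_\ell$: there the residual direction is the long root $\alpha_\ell=2e_\ell$, and every way of writing $2e_\ell=\beta+\beta'$ drags in a structure constant equal to $\pm2$ (unlike the short residual $e_\ell$ of $\mathsf{B}_\ell$, whose constant is $\pm1$), so the residual commutator step is unavailable over rings in which $2$ is a zero-divisor. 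This is why $\mathsf{C}_\ell$ is set aside in the hypothesis and handled by a separate computation, which still delivers $N=2$.
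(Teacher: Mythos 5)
Your overall strategy is the paper's: apply the element $g$ of Lemma~\ref{lemma:g-minus-one}, for which $g-1$ acts invertibly on an abelian $\mathsf{A}$-type vector-representation layer, to realize that layer as a single commutator, and clean up the leftover root subgroups with the Chevalley commutator formula. For $\mathsf{A}_\ell$, $\mathsf{B}_\ell$, $\mathsf{D}_\ell$, $\mathsf{E}_7$, $\mathsf{E}_8$ your sketch matches the paper (including the observation that the extra factor produced by the residual commutator lands back in the principal layer and is absorbed). Your diagnosis of the $\mathsf{C}_\ell$ obstruction (the structure constant $\pm2$ at the long root) is sensible, though note that the paper does still treat $\mathsf{C}_\ell$ inside the proof --- it is needed for Theorem~\ref{thm:comm-width-sr1} --- by producing $x_{\alpha_{\ell-1}+\alpha_\ell}(*)\,x_{\alpha_\ell}(t)$ as one commutator and absorbing the short-root correction; you defer this to an unspecified ``separate computation.''

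The genuine gaps are in $\mathsf{F}_4$, $\mathsf{G}_2$ and above all $\mathsf{E}_6$, where your reading of $\Sigma$ does not match the diagrams. For $\mathsf{F}_4$ the set $\Sigma$ is \emph{not} a single principal $\mathsf{A}$-type layer: it is two disjoint $2$-chains, and $N=1$ is obtained because $E(\langle\alpha_1,\alpha_3\rangle)\cong E(\mathsf{A}_1)\times E(\mathsf{A}_1)$ acts on both simultaneously and the two edges never meet inside $\Sigma$; for $\mathsf{G}_2$ one must first enlarge $\Sigma$ along the edges labelled $2$ so that a single $E(\langle\alpha_2\rangle)$ acts coherently on both chains. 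Your hedge ``combining the diagonal part of $g$ with the Chevalley commutator formula'' is not a proof of $N=1$ here. For $\mathsf{E}_6$ your plan --- one commutator for $\Sigma_6\cap\Delta_2$ plus one commutator per ``graded level'' of $\Sigma_2\setminus\{\alpha_2\}$ --- would fail: the level $m_2=1$ of $U(\Sigma_2)$ is the $20$-dimensional module $\Lambda^3$ of the $\mathsf{A}_5$ Levi $E(\Delta_2)$, not a vector representation, so Lemma~\ref{lemma:g-minus-one} gives no element with $g-1$ invertible on it, and a single commutator cannot clear that level. The paper instead partitions $\Sigma$ into three pieces of the weight diagram, each a union of $\mathsf{A}$-type chains acted on as vector representations by $E(\Delta_6)$, $E(\langle\alpha_5,\alpha_6\rangle)$ and $E(\langle\alpha_1,\alpha_2,\alpha_3\rangle)$ respectively, which is where the count $N=3$ actually comes from. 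Since you yourself identify the type-by-type diagram reading as the heart of the proof, these are exactly the places where your argument is not yet a proof.
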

\begin{proof}
We start with working out the case $\Phi=\mathsf{A}_\ell$. Denote $\Delta=\Delta_\ell$, then the Levi factor $E(\Delta)$ acts on the unipotent radical $U(\Sigma_\ell)$.

Write $\theta=\prod_{\alpha\in\Sigma}x_\alpha(\xi_\alpha)$. An element $g\in E(\Delta)=E(\ell,R)$ acts on the vector consisting of $\xi_\alpha$ exactly as in $(\mathsf{A}_{\ell-1},\varpi_1)$. To avoid confusion the result will be denoted by ${}^g(\xi_\alpha)$.

Let $g\in E(\Delta)$ be the element, constructed in Lemma \ref{lemma:g-minus-one}. Then one has
\begin{align*}
&\eta = \left[g,\prod_{\alpha\in\Sigma}x_\alpha(\zeta_\alpha)\right],\text{ where } (\zeta_\alpha)={}^{(g-1)^{-1}}(\xi_\alpha). \\
&g\cdot\prod_{\alpha\in\Sigma}x_\alpha(\zeta_\alpha)\cdot g^{-1} = \prod_{\alpha\in\Sigma}x_\alpha(\zeta'_\alpha)\text{ with } (\zeta'_\alpha)={}^g(\zeta_\alpha), \\
&\eta = \prod_{\alpha\in\Sigma}x_\alpha(\zeta''_\alpha)\text{ with }(\zeta''_\alpha)=(\zeta'_\alpha)-(\zeta_\alpha)={}^{(g-1)}(\zeta_\alpha)=(\xi_\alpha).
\end{align*}

Thus in this case $\theta=\eta$, a commutator.

\begin{figure}[htb]
\includegraphics{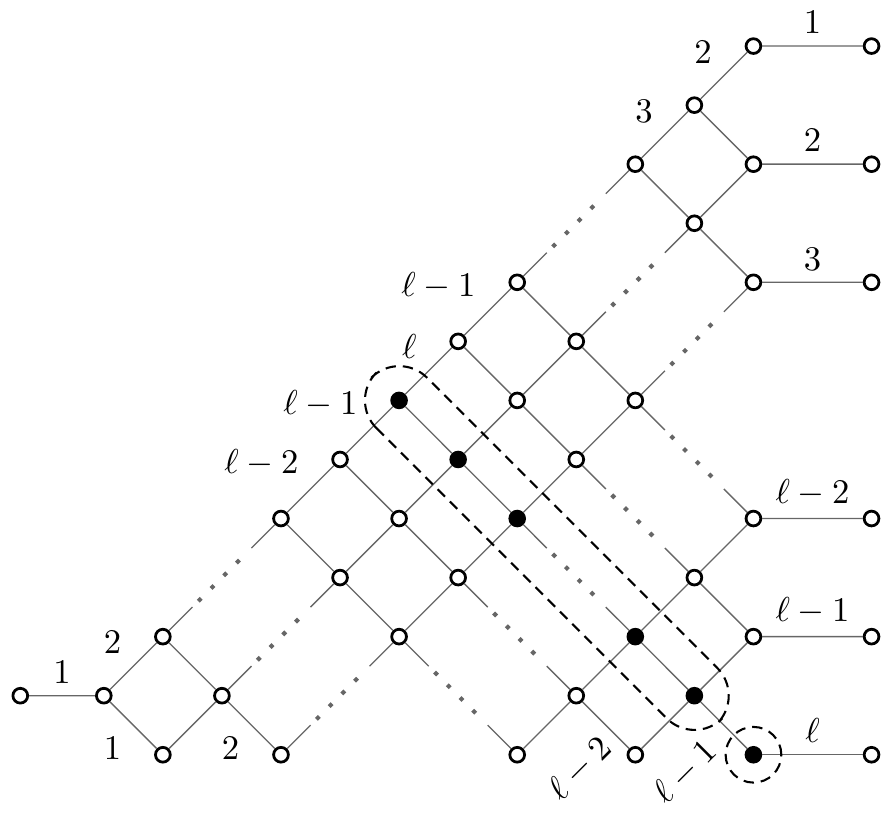}
\caption{$(\mathsf{C}_\ell,2\varpi_1)$}
\label{fig:cl-adj}
\end{figure}
\begin{figure}[htb]
\includegraphics{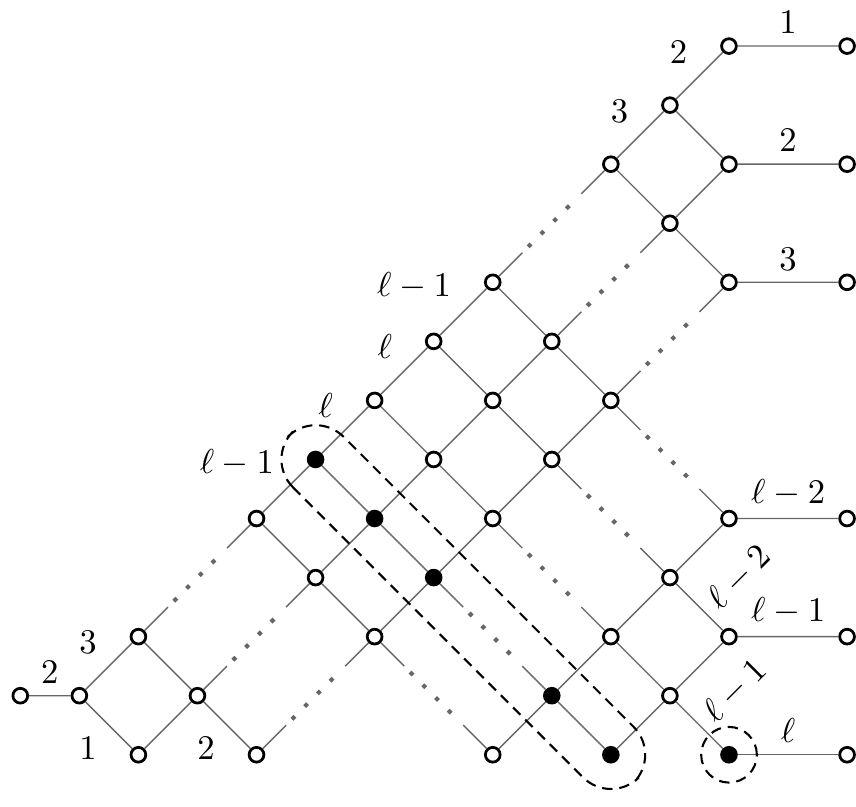}
\caption{$(\mathsf{B}_\ell,\varpi_2)$}
\label{fig:bl-adj}
\end{figure}
\begin{figure}[htb]
\includegraphics{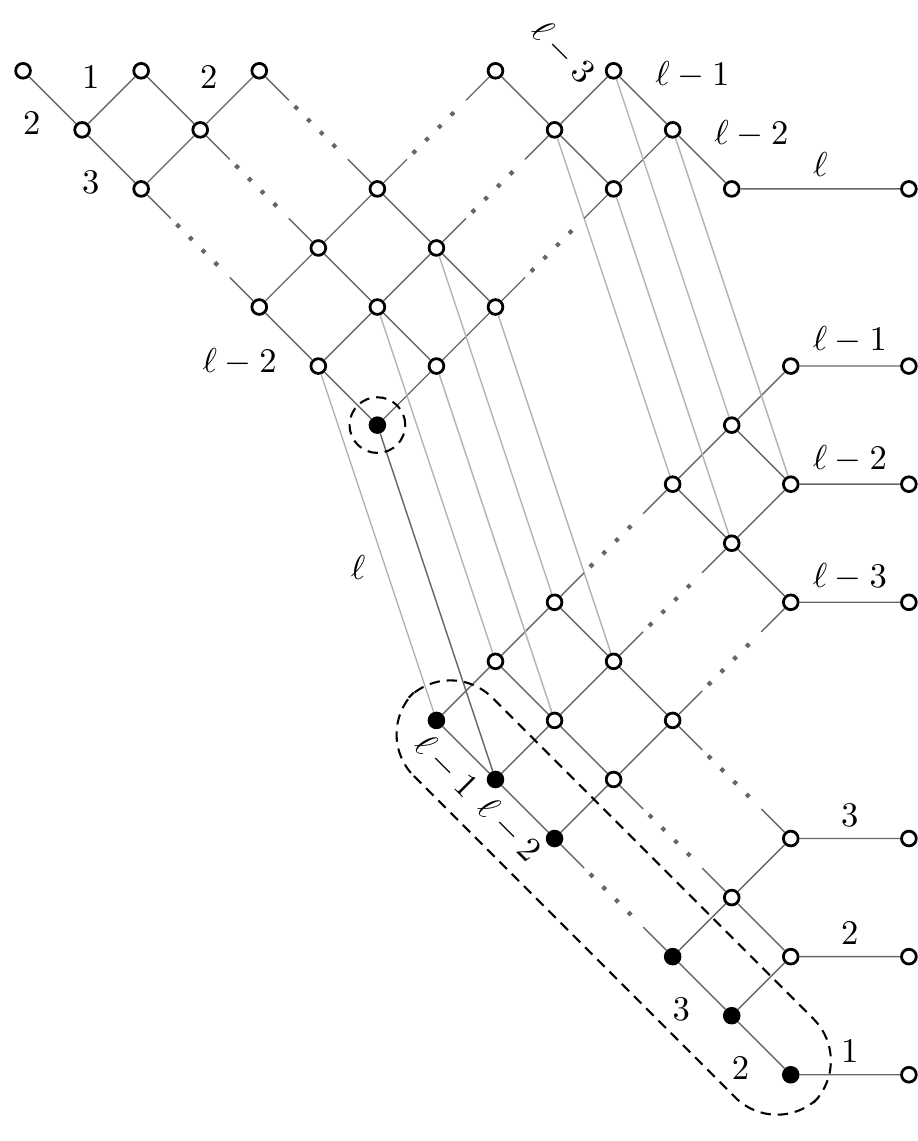}
\caption{$(\mathsf{D}_\ell,\varpi_2)$}
\label{fig:dl-adj}
\end{figure}
\begin{figure}[htb]
\includegraphics{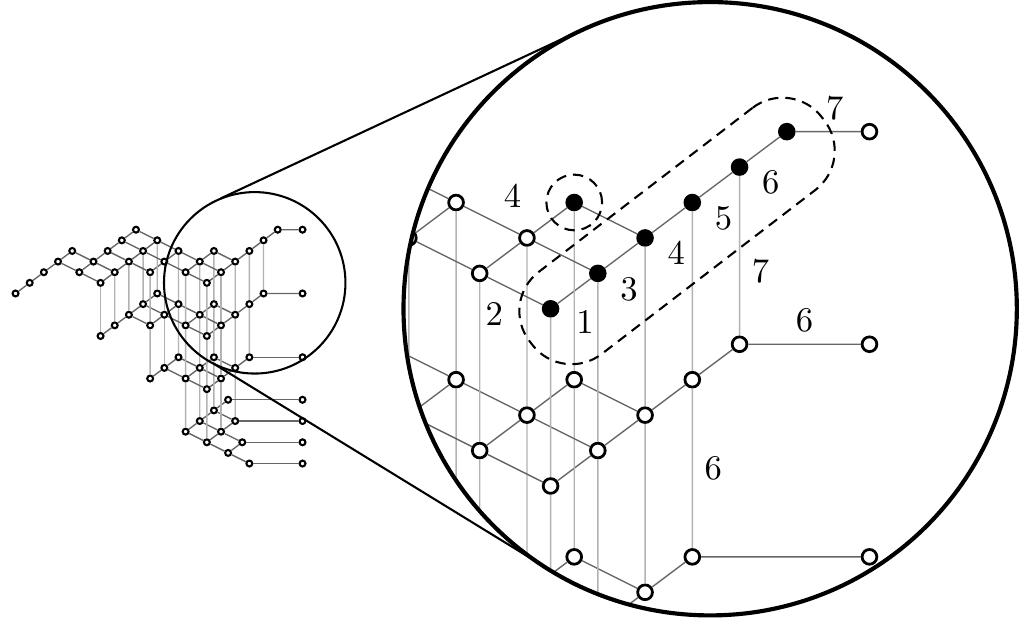}
\caption{$(\mathsf{E}_7,\varpi_1)$}
\label{fig:e7-adj}
\end{figure}
\begin{figure}[htb]
\includegraphics{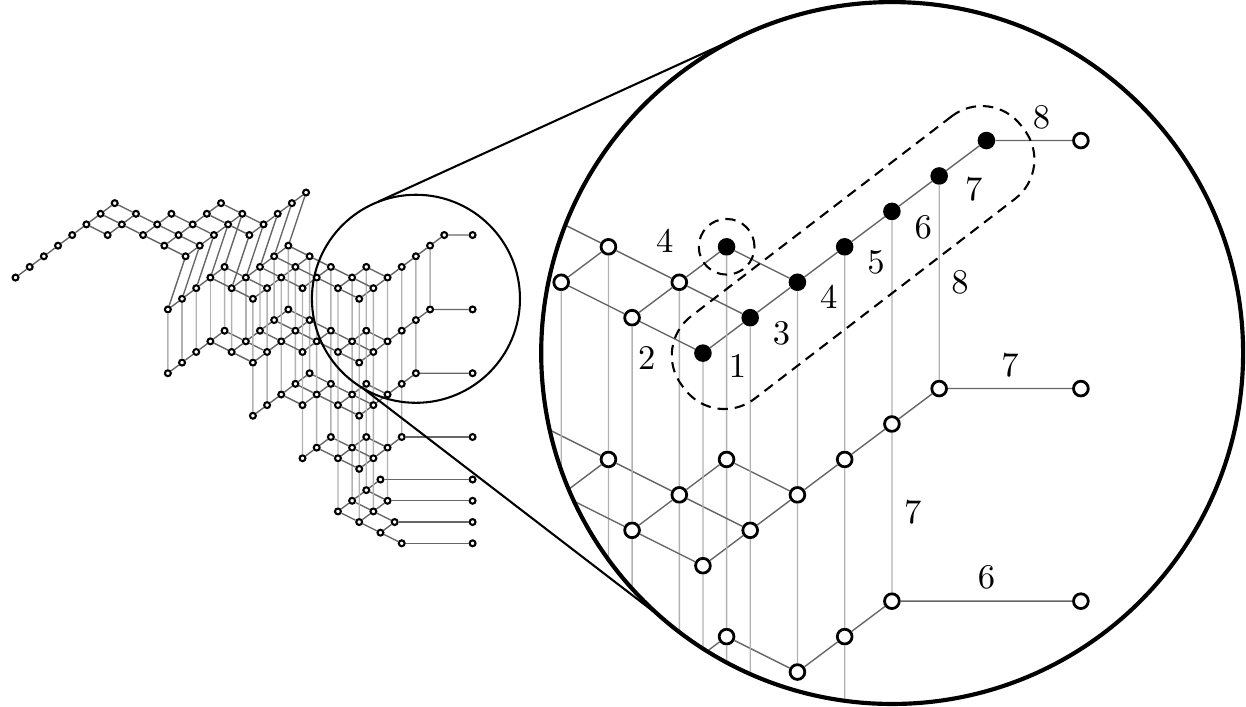}
\caption{$(\mathsf{E}_8,\varpi_8)$}
\label{fig:e8-adj}
\end{figure}
\begin{figure}[htb]
\includegraphics{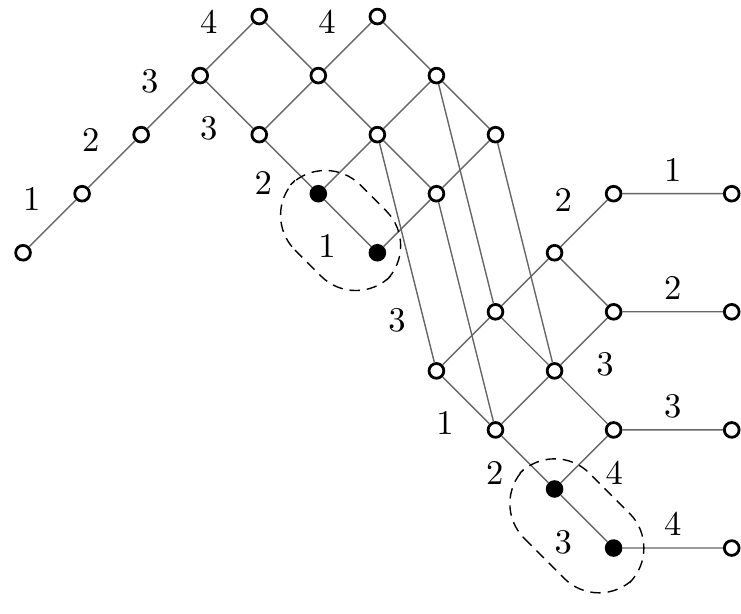}
\caption{$(\mathsf{F}_4,\varpi_1)$}
\label{fig:f4-adj}
\end{figure}
\begin{figure}[htb]
\includegraphics{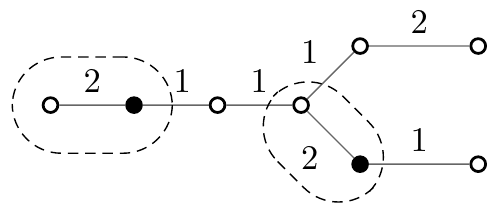}
\caption{$(\mathsf{G}_2,\varpi_2)$}
\label{fig:g2-adj}
\end{figure}
\begin{figure}[htb]
\includegraphics{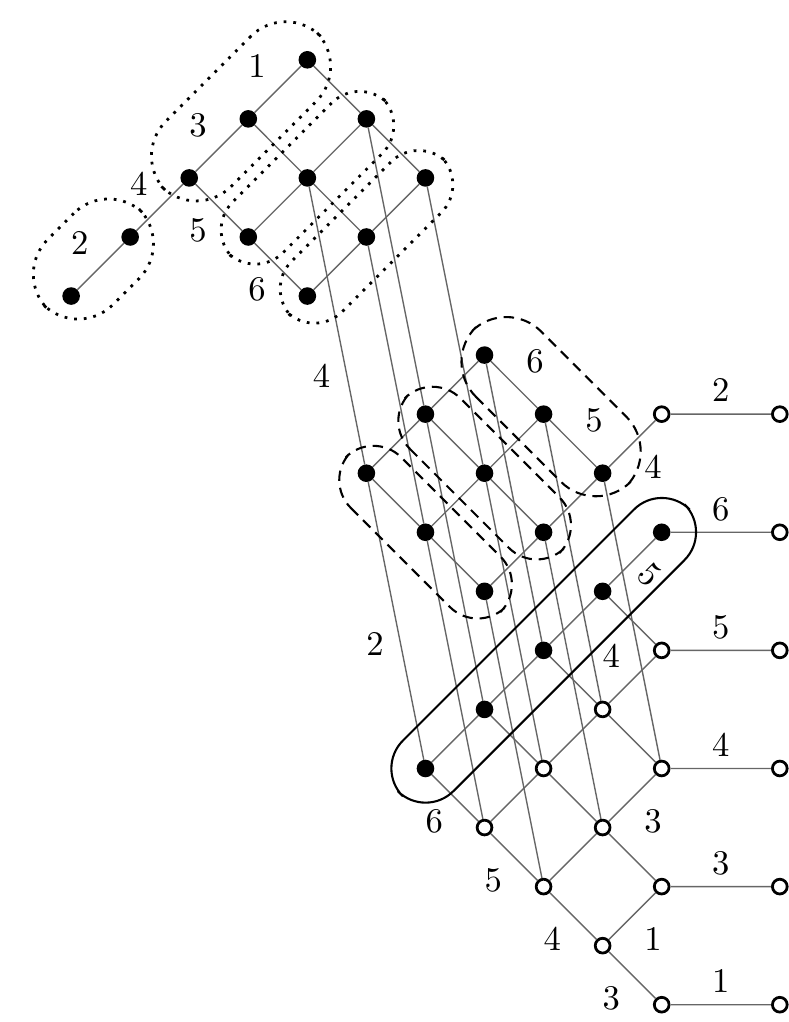}
\caption{$(\mathsf{E}_6,\varpi_2)$}
\label{fig:e6-adj}
\end{figure}

If $\Phi=\mathsf{C}_\ell$, we first write
\begin{align*}
& \theta=x_{\alpha_\ell}(a_\ell)\cdot\prod_{\alpha\in\Sigma'}x_\alpha(a_\alpha)=x_{\alpha_\ell}(a_\ell)\cdot\theta',\text{ where } \Sigma'=\Sigma\setminus\{\alpha_\ell\},\\
& x_{\alpha_{\ell-1}+\alpha_\ell}(*)\,x_{\alpha_\ell}(t)=\left[x_{2\alpha_{\ell-1}+\alpha_\ell}(1),x_{-\alpha_\ell}(\pm t)\right].
\end{align*}
Since $\alpha_{\ell-1}+\alpha_\ell\in\Sigma=\Sigma_\ell^{=1}\cap\Sigma_{\ell-1}^{\leqslant1}$,
\begin{align*}
& x_\alpha(t)=c\cdot x_{\alpha_{\ell-1}+\alpha_\ell}(*),\text{ where $c$ is a commutator,}\\
& \theta=c\cdot\theta'',\text{ for some } \theta''\in U(\Sigma\setminus\{\alpha_\ell\}).
\end{align*}
Then $g\in E(\Delta\cap\Delta_{\ell-1})$, constructed in Lemma \ref{lemma:g-minus-one}, doesn't use roots from $\Sigma_{\ell-1}$ and it is clear from the diagram (Figure~\ref{fig:cl-adj}) that it acts on $\theta''$ as prescribed by Lemma~\ref{lemma:g-minus-one}, allowing to repeat the argument we used for $\Phi=\mathsf{A}_\ell$. Thus $\theta$ is a product of two commutators.

If $\Phi=\mathsf{B}_\ell$ (Figure~\ref{fig:bl-adj}), we do the same as for $\mathsf{C}_\ell$. This time we exclude $\alpha_\ell$ and note that $\Delta_\ell\cap\Delta_{\ell-1}$ acts on the chain $\Sigma\setminus\{\alpha_\ell\}$ as $(\mathsf{A}_{\ell-2},\varpi_1)$. Again,
\[ x_{\alpha_\ell}(t)=\left[x_{\alpha_{\ell-1}+\alpha_\ell}(1),x_{-\alpha_{\ell-1}}(\pm t)\right]\cdot x_{\alpha_{\ell-1}+2\alpha_\ell}(*). \]

If $\Phi=\mathsf{D}_\ell$ (Figure~\ref{fig:dl-adj}), we divide $\Sigma$ into two parts: $\Sigma_1\cap\Delta_{\ell-1}$ and $\{\alpha=\alpha_1+\ldots+\alpha_{\ell-1}\}$. The first one is subject to the action of $\Delta_{\ell-1}$ (of type $\mathsf{A}_{\ell-1}$), while $x_\alpha(t)=[x_{\alpha+\alpha_\ell}(t),x_{-\alpha_\ell}(1)]$.

The very same method works for $\Phi=\mathsf{E}_7,\mathsf{E}_8$, see Figures~\ref{fig:e7-adj},~\ref{fig:e8-adj}.

In case $\Phi=\mathsf{F}_4$ (Figure~\ref{fig:f4-adj}) the subsystem subgroup $E(\langle\alpha_1,\alpha_3\rangle)\cong E(\mathsf{A}_1)\times E(\mathsf{A}_1)$ acts on $U(\Sigma)$, and the edges labeled $1$ and $3$ only meet each other outside $\Sigma$.

If $\Phi=\mathsf{G}_2$ (Figure~\ref{fig:g2-adj}), we slightly extend $\Sigma$ along the edges labeled $2$ to some $\Sigma'$, then $E(\langle\alpha_2\rangle)$ acts simultaneously on both chain of $U(\Sigma')$ (as a vector, it has zeroes on the additional roots).

In the remaining case $\Phi=\mathsf{E}_6$ (Figure~\ref{fig:e6-adj}) we split $\Sigma$ into three parts, marked by solid, dashed and dotted outlines in the figure, acted on by $E(\Delta_6)$, $E(\langle\alpha_5,\alpha_6\rangle)$ and $E(\langle\alpha_1,\alpha_2,\alpha_3\rangle)$ (of type $\mathsf{A}_5$, $\mathsf{A}_2$ and $\mathsf{A}_2\times\mathsf{A}_1$) correspondingly. 
\end{proof}
\begin{definition*}
A commutative ring $R$ is said to have \emph{stable rank $1$}, if for any $a,b\in R$, such that they generate $R$ as an ideal, there is $c\in R$, such that $a+bc\in R^*$ is invertible.
\end{definition*}
Examples of rings of stable rank $1$ are fields, semilocal rings, boolean rings, the ring of all algebraic integers, the disc-algebra.
\begin{theorem}\label{thm:comm-width-sr1}
Let $\Phi$ be a root system, $R$ a commutative ring of stable rank $1$. Then each element $g\in E(\Phi,R)$ is a product of at most $N$ commutators in $E(\Phi,R)$, where
\begin{itemize}
\item $N=3$ in case $\Phi=\mathsf{A}_\ell,\mathsf{F}_4,\mathsf{G}_2$;
\item $N=4$ in case $\Phi=\mathsf{B}_\ell,\mathsf{C}_\ell,\mathsf{D}_\ell,\mathsf{E}_7,\mathsf{E}_8$;
\item $N=5$ in case $\Phi=\mathsf{E}_6$.
\end{itemize}
\end{theorem}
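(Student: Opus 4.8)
The plan is to reduce an arbitrary $g\in E(\Phi,R)$ to a product of companion matrices, using the stable rank $1$ hypothesis, and then to convert such products into commutators by means of the elementary identity
\[ xy=[z,y^{-1}]\quad\text{whenever}\quad x=zy^{-1}z^{-1}, \]
that is, whenever $x$ is conjugate to $y^{-1}$. I expect the bound to split as $N=N'+2$, where $N'$ is the number of commutators supplied by Lemma~\ref{lemma:companion-as-commutators} for elements of $U(\Sigma)$, and the extra $2$ comes from realising $g$ (after one correction) as a product of two commutators, each obtained from a pair ``companion times inverse companion''.

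First I would invoke the standard consequence of stable rank $1$: a bounded unitriangular factorization of $E(\Phi,R)$ (the ``unitriangular factorization'' advertised in the keywords, and the only place the arithmetic of $R$ enters), say $g=u_1u_2u_3u_4$ with $u_1,u_3\in U^+$ and $u_2,u_4\in U^-$. I would then insert cancelling copies of $\pi,\pi^{-1}$ so as to read this product as $g=C_1D_1C_2D_2$, where each $C_i$ has the form $u\pi$ with $u\in U^+$ and each $D_i$ has the form $\pi^{-1}v$ with $v\in U^-$. By Lemma~\ref{lemma:conjugate-to-companion} every $C_i$ is conjugate to a companion matrix, and since $D_i^{-1}=v^{-1}\pi$ with $v^{-1}\in U^-$, Lemma~\ref{lemma:conjugate-to-companion-minus} shows every $D_i$ is conjugate to the inverse of a companion matrix.

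Next I would match the similarity classes so that $C_i$ is conjugate to $D_i^{-1}$: by Lemma~\ref{lemma:inverse-of-companion} the inverse of a companion matrix is again similar to a companion matrix, so both $C_i$ and $D_i^{-1}$ are conjugate to companion matrices, and the task is to arrange that they are conjugate to the \emph{same} one. Once this is achieved, each pair $C_iD_i$ collapses to a single commutator by the identity above, yielding the two commutators. The obstruction to an exact match is a determinant/torus discrepancy, which is precisely why $SL$ behaves worse than $GL$ (cf. the introduction); I would absorb it by first peeling off a factor $\theta\in U(\Sigma)$, so that $g\theta^{-1}=C_1D_1C_2D_2$ with the matching in force, and then spend the remaining $N'$ commutators on $\theta$ via Lemma~\ref{lemma:companion-as-commutators}. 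This handles all types at once: in the $GL$-like situation no correction is needed and one recovers width $2$, whereas the $SL$-constraint forces the $\theta$-correction and raises the count by $N'$.

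The main obstacle is exactly this matching step, carried out uniformly in $\Phi$ and without explicit matrix computation; it is here that the preparatory case analysis is used. For $\Phi=\mathsf{A}_{4k+1}$ the inverse of a companion matrix is only similar to a \emph{minus} companion matrix (Remark~\ref{rmk:inverse-of-companion-a4k+1}), so I would run the argument with Remark~\ref{rmk:conjugate-to-companion-minus-a4k+1} in place of Lemma~\ref{lemma:conjugate-to-companion-minus} and track the resulting sign. For $\Phi=\mathsf{E}_7$, where distinct lifts of the Coxeter element are not $H(\mathbb{Z})$-conjugate, I would rely on Lemma~\ref{lemma:coxeter-lifts-nice} to identify $\widehat{w_0}\pi\widehat{w_0}^{-1}$ with $\pi$; in the remaining types the lifts are adjusted by Lemma~\ref{lemma:coxeter-lifts-conjugate}. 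For $\Phi=\mathsf{E}_6$ the nonempty $\Theta$ and the extra root $\alpha_2$ demand the supplementary conjugation already used in Lemma~\ref{lemma:conjugate-to-companion}, which is also the reason $N'=3$ there. Finally I would keep in mind the hypotheses $\rk\Phi\geqslant2$ and the separate treatment of $\mathsf{C}_\ell$ from Lemma~\ref{lemma:companion-as-commutators}.
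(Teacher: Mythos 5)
Your overall strategy (unitriangular factorization, insertion of $\pi^{-1}\pi$, conjugation to companion form, the identity $xy=[z,y^{-1}]$ for $x=zy^{-1}z^{-1}$, plus a $U(\Sigma)$-correction handled by Lemma~\ref{lemma:companion-as-commutators}) is the right one, but the architecture has a gap that breaks the count. You keep all four unitriangular factors and form \emph{two} pairs $C_1D_1$ and $C_2D_2$, hoping to collapse each into a single commutator after "matching". Exact matching is impossible in general: $C_i$ and $D_i^{-1}$ are each conjugate to companion matrices $u\pi$ and $u'\pi$ with $u,u'\in U(\Sigma)$ generically different (in $SL_n$ these encode different characteristic polynomials), so each pair leaves behind its own discrepancy $\zeta_i=z_1z_2^{-1}\in U(\Sigma)$. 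The discrepancy is a byproduct of the conjugating data, not something you can peel off from $g$ in advance as a single $\theta$; with two pairs you get two corrections, and the count becomes $2+2(N-2)=2N-2$, which exceeds $N$ already for $\mathsf{A}_\ell$. The missing idea is the preliminary collapse of the length-$4$ factorization into \emph{one} commutator times a length-$2$ factorization: $u_1v_1u_2v_2=u_1u_2\cdot[u_2^{-1},v_1]\cdot v_1v_2=c_2\,u_3v_3$ with $u_3\in U^+$, $v_3\in U^-$. This leaves a single pair $\varphi\psi=(u_3\pi)(\pi^{-1}v_3)$, hence a single discrepancy $\zeta$, and the identity
\[ \varphi\psi=\left[\mu^{-1}\zeta\nu,\psi^{-1}\right]\cdot\zeta^{\mu\psi} \]
then yields $1+1+(N-2)=N$ commutators.

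Two smaller points. Your appeal to Lemma~\ref{lemma:inverse-of-companion} in the matching step is both unnecessary and unavailable in general: that lemma is stated only for $\mathsf{A}_\ell$ ($\ell\neq4k+1$) and $\mathsf{E}_6$; for $D_i^{-1}=v^{-1}\pi$ with $v^{-1}\in U^-$ you already have Lemma~\ref{lemma:conjugate-to-companion-minus}, which is all that is needed. Your treatment of $\mathsf{A}_{4k+1}$ via the sign tracked through Remarks~\ref{rmk:inverse-of-companion-a4k+1} and \ref{rmk:conjugate-to-companion-minus-a4k+1} is in the right spirit and matches what the correct argument does (one writes $g=-u_1v_1u_2v_2$ and gets $z_1z_2^{-1}=-\zeta$, the signs cancelling at the end).
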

\begin{proof}
We will use the so-called unitriangular factorization
\[ E(\Phi,R)=U^+(\Phi,R)\ U^-(\Phi,R)\ U^+(\Phi,R)\ U^-(\Phi,R), \]
that holds for any elementary Chevalley group over any commutative ring of stable rank $1$ \cite{VavSmSuUnitrEng}.

Assume first $\Phi\neq\mathsf{A}_{4k+1}$. Write $g=u_1v_1u_2v_2$, where $u_i\in U^+$ and $v_i\in U^-$. Then $g=u_3c_1v_3=c_2u_3v_3=c_2(u_3\pi)(\pi^{-1}v_3)$, where $c_i$ are commutators. Denote $\varphi=u_3\pi$ and $\psi=\pi^{-1}v_3$. By Lemmas~\ref{lemma:conjugate-to-companion} and \ref{lemma:conjugate-to-companion-minus} there exist $\mu,\nu\in E(\Phi)$ such that $z_1=\mu\varphi\mu^{-1}$ and $z_2=\nu\psi^{-1}\nu^{-1}$ are companion matrices. Since $z_1z_2^{-1}=\zeta\in U(\Sigma)$, one has
\[\mu\varphi\mu^{-1}=\zeta\nu\psi^{-1}\nu^{-1}.\]
Thus $\varphi=\mu^{-1}\zeta\nu\psi^{-1}\nu^{-1}\mu$ and
\begin{multline*}
\varphi\psi = \mu^{-1}\zeta\nu\psi^{-1}\nu^{-1}\mu\cdot\psi =\\=
\mu^{-1}\zeta\nu\cdot\psi^{-1}\cdot\nu^{-1}\cdot\left(\zeta^{-1}\mu\cdot\psi\cdot\psi^{-1}\mu^{-1}\zeta\right)\cdot\mu\psi =\\= \left[\mu^{-1}\zeta\nu,\psi^{-1}\right]\cdot\psi^{-1}\mu^{-1}\zeta\mu\psi = \left[\mu^{-1}\zeta\nu,\psi^{-1}\right]\cdot\zeta^{\mu\psi}.
\end{multline*}
Since $\zeta\in U(\Sigma)$ is a product of $N-2$ commutators by Lemma~\ref{lemma:companion-as-commutators}, we are done.

If $\Phi=\mathsf{A}_{4k+1}$, we start by writing $g=-u_1v_1u_2v_2$ for some $u_i\in U^+$, $v_i\in U^-$. Then, as previously, $g=-c_2\varphi\psi$, where $\varphi$ is similar to a companion matrix by Lemma~\ref{lemma:conjugate-to-companion}, while $\psi^{-1}$ is similar to a minus companion matrix by Remark~\ref{rmk:conjugate-to-companion-minus-a4k+1}. Then for $z_1=\mu\varphi\mu^{-1}$ and $z_2=\nu\psi^{-1}\nu^{-1}$ one has $z_1z_2^{-1}=-\zeta$ for some $\zeta\in U(\Sigma)$. Again,
\[ g=-c_2\varphi\psi=-c_2\cdot\left(-\left[\mu^{-1}\zeta\nu,\psi^{-1}\right]\cdot\zeta^{\mu\psi}\right) \]
is a product of $N$ commutators.
\end{proof}
\section{Final remarks}
We first note that starting with a uniriangular factorization of different length, one immediately obtains nice bounds on the commutator width. For example, Chevalley groups over boolean rings admit the unitriangular factorization $E(\Phi)=U^+U^-U^+$ of length $3$, thus any of its elements is conjugated to the product $uv$ for some $u\in U^+$, $v\in U^-$. It follow that each element of $E(\Phi,R)$ can be expressed as a product of $N-1$ commutators ($N$ as in Theorem~\ref{thm:comm-width-sr1}).

Another example is $E(\Phi,\mathbb{Z}[1/p])$, which admits the factorization of length $5$ \cite{VavSmSuUnitrEng,VseUnitrZ1p}, thus having the same estimate on its commutator width, as groups over rings of stable rank $1$.

In \cite{VasWheCommComp,ArlVasYouCommUnitary} the commutator width is computed also for what is called the \emph{extended} classical groups (the examples being $GL_n$, $GSp_{2n}$, $GO_n$, etc.). The resulting estimates are slightly better, because one can start with the Gauss decomposition \cite{VavSmSuGauss}
\[ E(\Phi,R)=H(\Phi,R)U^+(\Phi,R)U^-(\Phi,R)U^+(\Phi,R) \]
instead of the unitriangular factorization. Then one modifies Lemma~\ref{lemma:conjugate-to-companion} as follows (here $\overline{T}_{\mathrm{sc}}(\Phi)$ is the extended torus, see \cite{BerMooExtendsions,VavWeightElements}):
\begin{lemma}\label{lemma:conjugate-to-companion-borel}
For any $b\in H(\Phi)U^+(\Phi)$ exists $\eta\in \overline{T}_{\mathrm{sc}}(\Phi)U^+(\Phi)$ such that $\eta b\pi\eta^{-1}$ is a companion matrix.
\end{lemma}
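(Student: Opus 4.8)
The plan is to reduce to Lemma~\ref{lemma:conjugate-to-companion} by using the extra room in the extended torus to clear away the diagonal part of $b$. Write $b=hu$ with $h\in H(\Phi)$ and $u\in U^+(\Phi)$, so that $b\pi=hu\pi$. First I would produce an element $t\in\overline{T}_{\mathrm{sc}}(\Phi)$ for which $t\,(b\pi)\,t^{-1}$ has trivial diagonal part, i.e.\ lies in $U^+(\Phi)\pi$, and then quote Lemma~\ref{lemma:conjugate-to-companion} to finish.

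For the first step, conjugate $b\pi$ by $t\in\overline{T}_{\mathrm{sc}}(\Phi)$. Since the torus commutes with $h$ and normalises $U^+(\Phi)$, and since $t\pi t^{-1}=\bigl(t\cdot{}^\pi(t^{-1})\bigr)\pi$ with ${}^\pi(t^{-1})=\pi t^{-1}\pi^{-1}$, one gets, after commuting the torus factor past the unipotent one,
\[ t\,(hu\pi)\,t^{-1}=\bigl(h\cdot t\cdot{}^\pi(t^{-1})\bigr)\,u''\,\pi,\qquad u''\in U^+(\Phi). \]
Hence it suffices to solve $t\cdot{}^\pi(t^{-1})=h^{-1}$. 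Writing the torus multiplicatively as $X_*\otimes R^*$, on which conjugation by $\pi$ realises the action of the Coxeter element $\widetilde\pi$ (Remark~\ref{rmk:R3}), this becomes the linear equation $(1-\widetilde\pi)\,t=h^{-1}$ in additive notation.

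The heart of the matter, and the step I expect to be the main obstacle, is the solvability of this equation inside the extended torus. Because $\widetilde\pi$ is a Coxeter element it has no eigenvalue $1$, so $1-\widetilde\pi$ is invertible over $\mathbb{Q}$; but on the coroot lattice $X_*(T_{\mathrm{sc}})$ its cokernel has order $\det(1-\widetilde\pi)$, which is $\neq\pm1$ in general (for $\mathsf{A}_\ell$ it equals the Coxeter number $\ell+1$), so the \emph{ordinary} torus $H(\Phi)$ is not large enough. On the other hand, for any coweight $v$ the element $(1-\widetilde\pi)v$ is a sum of coroots, since each simple reflection occurring in $\widetilde\pi$ alters a vector by an integer multiple of a coroot; thus $1-\widetilde\pi$ does carry the larger cocharacter lattice of $\overline{T}_{\mathrm{sc}}(\Phi)$ into $X_*(T_{\mathrm{sc}})$, and the claim to be verified, type by type, is that this map is \emph{onto} $X_*(T_{\mathrm{sc}})$. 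Granting this, right-exactness of $-\otimes R^*$ makes $t\mapsto t\cdot{}^\pi(t^{-1})$ surjective from $\overline{T}_{\mathrm{sc}}(\Phi)$ onto $H(\Phi)$, so the required $t$ exists. The model case is $SL_{\ell+1}\subset GL_{\ell+1}$: with $\widetilde\pi$ the cyclic shift, $1-\widetilde\pi$ sends $\mathbb{Z}^{\ell+1}$ onto $\{\,v:\sum v_i=0\,\}=X_*(T_{\mathrm{sc}})$, which is exactly the coset where $h^{-1}$ sits.

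With such a $t$ fixed we have $t\,(b\pi)\,t^{-1}=u''\pi$ for some $u''\in U^+(\Phi)$, and Lemma~\ref{lemma:conjugate-to-companion} supplies $\eta'\in U^+(\Phi)$ making $\eta'(u''\pi)\eta'^{-1}$ a companion matrix (for $\mathsf{E}_6$ the elimination of $x_{\alpha_2}(\ast)$ is already part of that lemma). Finally $\eta'\,t=t\cdot(t^{-1}\eta't)$ with $t^{-1}\eta't\in U^+(\Phi)$, so the composite conjugator lies in $\overline{T}_{\mathrm{sc}}(\Phi)\,U^+(\Phi)$, as required.
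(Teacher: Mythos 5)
The paper states this lemma without proof (it appears in the final remarks as a modification of Lemma~\ref{lemma:conjugate-to-companion}), so there is no argument of the author's to compare against; judged on its own, your reduction is the natural one and it does work for every type except $\mathsf{E}_6$. The identity $t(hu\pi)t^{-1}=(h\cdot t\cdot{}^\pi(t^{-1}))u''\pi$ is correct, and the surjectivity you leave as a type-by-type check can in fact be done uniformly: for a genuine Coxeter element $c$ one has $|\det(1-c)|=\det(\text{Cartan matrix})=[P^\vee:Q^\vee]$, and since $(1-c)P^\vee\subseteq Q^\vee$ while $[Q^\vee:(1-c)Q^\vee]=[P^\vee:Q^\vee]=[(1-c)P^\vee:(1-c)Q^\vee]$, multiplicativity of indices forces $(1-c)P^\vee=Q^\vee$. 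So, modulo identifying the cocharacter lattice of $\overline{T}_{\mathrm{sc}}$ with a lattice surjecting onto $P^\vee$ in the relevant way, that part of your argument is sound.

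The genuine gap is $\mathsf{E}_6$. Your argument opens with ``because $\widetilde\pi$ is a Coxeter element it has no eigenvalue $1$'', but for $\mathsf{E}_6$ the element $\widetilde\pi=\sigma_1\sigma_3\sigma_4\sigma_5\sigma_6$ is a Coxeter element only of the $\mathsf{A}_5$-subsystem $\Delta_2$ --- the paper notes $\Theta^{\widetilde\pi}=\Sigma_2\neq\varnothing$ precisely in this case. It fixes the line spanned by $\varpi_2^\vee$ pointwise, so $1-\widetilde\pi$ has rank $5$ and its image cannot exhaust the rank-$6$ lattice $X_*(T_{\mathrm{sc}})$: the component of $h$ along the fixed one-parameter subtorus cannot be cancelled by any choice of $t$. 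Nor can the unipotent part of $\eta$ absorb it: for $\alpha>0$, conjugating $uh\pi$ by $x_\alpha(c)$ leaves the torus component of the Bruhat normal form in the cell $B\widetilde\pi B$ unchanged (if $\widetilde\pi\alpha>0$ the factor $x_{\widetilde\pi\alpha}(\pm c)$ is absorbed on the left; if $\widetilde\pi\alpha<0$ the factor $x_\alpha(-c)$ remains on the right as part of $U(\Omega_0)$), so the class of $h$ modulo $(1-\widetilde\pi)\overline{T}_{\mathrm{sc}}$ is an invariant of conjugation by $\overline{T}_{\mathrm{sc}}U^+$. You should therefore either exclude $\mathsf{E}_6$ from your proof or supply a genuinely different mechanism for it; as written the argument fails there, and the failure in fact casts doubt on the statement itself in that one case.
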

The result is obtained in the same way as in Theorem~\ref{thm:comm-width-sr1}. Moreover, in this setting there is no need to treat the case $\Phi=\mathsf{A}_{4k+1}$ individually, since one can put $\widehat{w_0}=p_n$ as in \cite{VasWheCommComp}.

One more interesting thing about \cite{ArlVasYouCommUnitary} is an even better estimate in case of even-dimensional orthogonal group $O_{2n}$. The trick is to use not the Coxeter element of the Weyl group $W(\mathsf{D}_\ell)$, but rather a certain Coxeter element of the $\mathsf{A}_{\ell-1}$ subsystem $\Delta_\ell$, composed with an inner automorphism, corresponding to the symmetry of $\mathsf{D}_\ell$ Dynkin diagramm. This allows to take $\Sigma=\Sigma_\ell\cap\Delta_{\ell-1}$, acted on by $E(\Delta_\ell\cap\Delta_{\ell-1})$, which is exactly the case of $\mathsf{A}_{\ell-1}$. However, this automorphism is inner only for $O_{2n}$, but not for $SO_{2n}$, despite what is claimed in \cite{ArlVasYouCommUnitary}.

The followng argument, showing that this automorphism is inner in $O_{2n}$, is due to S. Garibaldi.

Let $\rho:G\to GL(V)$ be an irreducible representation of $G$ with the hidhest weight $\lambda$. Multiplying the given automorphism $\sigma$ of $\Phi$ by an element of the Weyl group, we can assume that $\sigma(\Pi)=\Pi$ (and $\sigma$ sends dominant weights to dominant weights). We wish to find an $x\in GL(V)$ such that $\sigma(g)=xgx^{-1}$ for every $g\in G$. Proposition~2.2 of \cite{BerGarLarPreservers} says that such $x$ exists if and only if $\sigma(\lambda)=\lambda$. Now let $\rho$ be the natural representation of $O_{2n}$, and since $\varpi_1$ is fixed by the symmetry, such $x$ exists in $GL_{2n}$. For every $G$-invariant polynomial function $f$ on $V$, $xf$ is $\rho(G)$-invariant. But $\rho(G)=G$, so $xf$ is $G$-invariant. If $f$ is a nondegenerate quadratic form on $V$, $xf$ is $SO(f)$-invariant and is a nondegenerate quadratic form, so it must be a scalar multiple of $f$.

For even-dimensional Spin group no such element exist in Pin or Clifford group, as it must swap the highest weights of two half-spin summand of its spin representation.

\bibliographystyle{amsalpha}
\bibliography{comm-width}

\providecommand{\bysame}{\leavevmode\hbox to3em{\hrulefill}\thinspace}
\providecommand{\MR}{\relax\ifhmode\unskip\space\fi MR }
\providecommand{\MRhref}[2]{%
  \href{http://www.ams.org/mathscinet-getitem?mr=#1}{#2}
}
\providecommand{\href}[2]{#2}
\begin{thebibliography}{LOST10}

\bibitem[AVY95]{ArlVasYouCommUnitary}
F.~A. Arlinghaus, L.~N. Vaserstein, and Hong You, \emph{Commutators in
  pseudo-orthogonal groups}, J. Austral. Math. Soc. Ser. A \textbf{59} (1995),
  no.~3, 353--365.

\bibitem[BGL14]{BerGarLarPreservers}
H.~Bermudez, S.~Garibaldi, and V.~Larsen, \emph{Linear preservers and
  representations with a 1-dimensional ring of invariants}, Trans. Amer. Math.
  Soc. \textbf{366} (2014), no.~6, 4755--4780.

\bibitem[BM75]{BerMooExtendsions}
S.~Berman and R.~Moody, \emph{Extensions of {C}hevalley groups}, Israel Journal
  of Mathematics \textbf{22} (1975), no.~1, 42--51.

\bibitem[BV00]{BakVavUnitary}
A.~Bak and N.~Vavilov, \emph{Structure of hyperbolic unitary groups. {I}.
  {E}lementary subgroups}, Algebra Colloq. \textbf{7} (2000), no.~2, 159--196.

\bibitem[EG98]{EllGorOreConj}
E.~W. Ellers and N.~Gordeev, \emph{On the conjectures of {J}. {T}hompson and
  {O}. {O}re}, Trans. Amer. Math. Soc. \textbf{350} (1998), no.~9, 3657--3671.

\bibitem[LOST10]{LieOBrShaTieOre}
M.~W. Liebeck, E.~A. O'Brien, A.~Shalev, and P.~H. Tiep, \emph{The {O}re
  conjecture}, J. Eur. Math. Soc. (JEMS) \textbf{12} (2010), no.~4, 939--1008.

\bibitem[PSV98]{PloSemVavVBRA}
E.~Plotkin, A.~Semenov, and N.~Vavilov, \emph{Visual basic representations: an
  atlas}, Internat. J. Algebra Comput. \textbf{8} (1998), no.~1, 61--95.

\bibitem[SSV12]{VavSmSuGauss}
A.~Smolensky, B.~Sury, and N.~Vavilov, \emph{Gauss decomposition for
  {C}hevalley groups, revisited}, International Journal of Group Theory
  \textbf{1} (2012), no.~1, 3--16.

\bibitem[Vav00]{VavThirdLook}
N.~Vavilov, \emph{A third look at weight diagrams}, Rend. Sem. Mat. Univ.
  Padova \textbf{104} (2000), 201--250.

\bibitem[Vav01]{VavDIY}
N.~A. Vavilov, \emph{Do it yourself structure constants for {L}ie algebras of
  types {$E_l$}}, Zap. Nauchn. Sem. S.-Peterburg. Otdel. Mat. Inst. Steklov.
  (POMI) \textbf{281} (2001), no.~Vopr. Teor. Predst. Algebr. i Grupp. 8,
  60--104, 281.

\bibitem[Vav09]{VavWeightElements}
N.~Vavilov, \emph{Weight elements of {C}hevalley groups}, St. Petersburg
  Mathematical Journal \textbf{20} (2009), no.~1, 23--57.

\bibitem[Vse13]{VseUnitrZ1p}
M.~Vsemirnov, \emph{Short unitriangular factorizations of
  {$SL_2(\mathbb{Z}[1/p])$}}, The Quarterly Journal of Mathematics (2013),
  has044.

\bibitem[VSS12]{VavSmSuUnitrEng}
N.~A. Vavilov, A.~V. Smolensky, and B.~Sury, \emph{Unitriangular factorizations
  of {C}hevalley groups}, Journal of Mathematical Sciences \textbf{183} (2012),
  no.~5, 584--599.

\bibitem[VW90]{VasWheCommComp}
L.~N. Vaserstein and E.~Wheland, \emph{Commutators and companion matrices over
  rings of stable rank {$1$}}, Linear Algebra Appl. \textbf{142} (1990),
  263--277.

\end{thebibliography}

\end{document}